\documentclass[10pt]{article}
\usepackage{amssymb,amsmath,amsthm,amsfonts,amscd}
\usepackage{mathrsfs}
\usepackage[all]{xy}
\usepackage[latin1]{inputenc}
\usepackage{auxiliar}
\overfullrule=0pt
\numberwithin{equation}{section}

\begin{document}

\title{Regularity for elliptic pairs over $\C[[\h]]$} 
\author{David Raimundo
\footnote{The research of the author
was supported by Funda\c c{\~a}o para a Ci\^encia e Tecnologia and by Funda\c c\~ao Calouste Gulbenkian (Programa Est\'imulo \`a Investiga\c c\~ao).
\newline Mathematics Subject Classification: 35A27, 46L65}}
\maketitle

\begin{abstract} 
We extend the results of Schapira and Schneiders~\cite{ScS} on relative regularity, finiteness and duality (in the smooth case) of elliptic pairs 
to the framework of $\shd[[\hbar]]$-modules and constructible sheaves of $\C[[\h]]$-modules. 
\end{abstract}

\section*{Introduction} 
The notion of elliptic pair goes back to~\cite{ScS}, 
where the authors consider a morphism of complex analytic manifolds $f:X\to Y$ and say that 
a coherent $\shd_X$-module $\shm$ and an $\R$-constructible sheaf of $\C$-modules $F$ form an $f$-elliptic pair if 
the $f$-characteristic variety of $\shm$ and the micro-support of $F$ do 
not intersect outside the zero-section of the cotangent bundle $T^\ast X$. 

Consider the constant map $a_X:X\to\rmptt$. If $X$ is the complexification of a real analytic manifold $M$ and $\shm$ is an elliptic system on $M$, 
then $(\shm,\C_M)$ is an $a_X$-elliptic pair. Elliptic pairs can thus be regarded as a generalization of elliptic systems.
The functorial properties of elliptic pairs are studied in~\cite{ScS}, where theorems of 
regularity, finiteness and duality are proved for these objects. As the authors point out, such theorems generalize several classical results 
of $\shd_X$-modules theory, complex analytic geometry and elliptic systems theory. 

The purpose of this paper is to extend the main results of~\cite{ScS} to the framework 
of modules over the ring $\shd_X[[\h]]$, the ring of differential 
operators with a formal parameter $\h$. 
This ring appeared first in~\cite{KS3} as an example of an algebra of formal deformation. 
Therefore, the machinery developed in~\cite{KS3} to perform the study 
of deformation-quantization modules also apply to the study of $\shd[[\h]]$-modules. 
Such study has been performed in~\cite{DGS} and~\cite{MMFR}. 

Denote by $\C[[\h]]$ the ring of formal power series on $\C$. Consider the right exact functor 
that maps each sheaf of $\C[[\h]]$-modules $F$ to $F/\h F$ regarded as a sheaf of $\C$-modules.
Consider also its left derived functor denoted by $\gr$.

In this work we introduce the notion of $f$-elliptic pair over $\C[[\hbar]]$ in a natural way: 
if $\shm$ is a coherent $\shd_X[[\h]]$-module and $F$ is an $\R$-constructible sheaf of $\C[[\h]]$-modules, then $(\shm,F)$ is an $f$-elliptic pair over $\C[[\h]]$ if and only if $(\gr(\shm),\gr(F))$ is an elliptic pair in the sense of~\cite{ScS}. 
This allow us to prove theorems for $f$-elliptic pairs over $\C[[\h]]$ 
using properties of $\gr$ given by~\cite{KS3} to reduce the proofs to the theorems of~\cite{ScS}. 

Let us mention our main results. In Theorems~\ref{T53} and~\ref{T54} we prove regularity properties for $f$-elliptic pairs over $\C[[\h]]$. 
These regularity theorems generalize a classical regularity property of elliptic systems in the real analytic setting: 
the complex of real analytic solutions of an elliptic system is isomorphic to the complex of hyperfunctions solutions. 

In Theorem~\ref{T55} we use the regularity of $f$-elliptic pairs over $\C[[\h]]$ to give a finiteness criteria. 
Denote by $\deimh f$ the proper direct image in the framework of $\shd_X[[\h]]$-modules introduced in~\cite{MMFR}.
The statement of the theorem is the following: given an $f$-elliptic pair $(\shm,F)$, such that $f$ is proper when restricted to $\supp(\shm)\cap\supp(F)$ 
and such that $\shm$ is good, 
then the cohomologies of the direct image $\deimh f (\shm\lltens[{\shd_X[[\h]]}]F)$ are coherent over $\shd_Y[[\h]]$. 

In Theorem~\ref{T60} we prove a duality result in the case of a smooth morphism. 
It states that the direct image and the duality functor for $\shd_X[[\h]]$-modules 
commute when applied to $f$-elliptic pairs that satisfy the finiteness criteria.
The reason why we must restrict to the smooth case is that two fundamental properties hold in this case:
the transfer module is coherent over $\shd_X[[\h]]$ and 
the extension rings $f^{-1}(\shd_X[[\h]])$ and $(f^{-1}\shd_X)[[\h]]$ are isomorphic. 
These properties are necessary to our construction of the duality morphism. 
We note that the smooth case includes the interesting case $f=a_X$.


In the last part of the paper we illustrate our results in some particular cases. 
For example, in the case where $X$ is the complexification of a real analytic manifold $M$, 
we obtain regularity, finiteness and duality properties on the sheaves of formal analytic functions and formal hyperfunctions on $M$.


\paragraph{Acknowledgments.} 
We thank Pierre Schapira for proposing the subject 
of this paper and for making useful 
remarks and suggestions. 
We thank Teresa Monteiro Fernandes for her supervision, helpful discussions and constant incentive.
We also thank Ana Rita Martins for her comments.



\section{Complements on formal extensions.}

In view of our purpose it suffices to work on the complex analytic setting,  
although some results hold in a more general situation.
In the sequel $X$ denotes a complex analytic manifold of dimension $d_X$.

We follow the notations of~\cite{KS1}. 
Namely, if $\shr$ is a sheaf of rings on $X$, we denote by $\md[\shr]$ 
the category of left $\shr$-modules and by $\Derb(\shr)$ the 
bounded derived category of $\md[\shr]$. 
If $\shr$ is coherent, 
we denote by $\mdc[\shr]$ the full abelian subcategory of $\md[\shr]$ 
of coherent objects and by $\Derb_\coh(\shr)$ 
the full triangulated subcategory
of $\Derb(\shr)$ of objects with coherent cohomology groups.
In the sequel, when the base ring is $\C$ we may omit it. 


Let $\shro$ denote a sheaf of $\C_X$-algebras on $X$ 
and set $\shr\eqdot\shro[[\h]]=\Pi_{n\geq 0} \shro \h^n$. 
Then $\shr$ is a sheaf of algebras over the ring $\C[[\h]]$, the ring of formal power series with complex coefficients. 
One uses the abbreviation $\coro\eqdot\C[[\h]]$.

Consider the following left exact functor studied in~\cite{DGS}:
\eq\label{h}
(\scbul)^{\h}:\md[\shro] &\to& \md[\shr]\\ 
\shn &\to& \shn^{\h}\eqdot \shn[[\h]]
=\varprojlim_{n\geq 0}(\shn\tens[\shro]\shr/\h^{n+1}\shr).\nonumber
\eneq
Recall that the sections of $\shn^\h$ on an open subset of $X$ are formal 
power series of sections of $\shn$ on the same open subset.

One denotes by $(\scbul)^\rhb\cl \Derb(\shro) \to \Derb(\shr)$ the right derived functor of $(\scbul)^\hbar$.
For each $F\in\Derb(\shro)$, $F^\rhb$ is called the formal extension of $F$. 

One says that $F\in\md[\shro]$ is $\h$-acyclic if $F^\rhb\simeq F^\h$.


\begin{proposition}[\cite{DGS}, Proposition~2.5]\label{chc5} 
Let $\shn\in\md[\shro]$ and suppose that $\mathcal{B}$ is either a basis of open subsets of $X$, or a basis of compact subsets, such that $H^j(S;\shn)=0$ for all $j>0$ and all $S\in\mathcal{B}$. Then, $\shn$ is $\hbar$-acyclic.
\end{proposition}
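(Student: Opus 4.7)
The plan is to compute $\shm^\rhb$ via an injective resolution and check it collapses to $\shm^\h$ in degree zero. Choose an injective resolution $\shm\to I^\bullet$ in $\md[\shro]$, so that $\shm^\rhb\simeq (I^\bullet)^\h=\varprojlim_n\bigl(I^\bullet\otimes_\shro\shr/\h^{n+1}\shr\bigr)$. For each fixed $n$, the $\shro$-module $\shr/\h^{n+1}\shr$ is free of rank $n+1$, so $I^\bullet\otimes_\shro\shr/\h^{n+1}\shr\simeq (I^\bullet)^{\oplus (n+1)}$ is still an acyclic resolution of $\shm\otimes_\shro\shr/\h^{n+1}\shr$, and the transition maps of the inverse system are componentwise split surjections.

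For $S\in\mathcal{B}$, the hypothesis $H^j(S;\shm)=0$ for $j>0$ gives the quasi-isomorphism $\Gamma(S;I^\bullet)\simeq\shm(S)$ concentrated in degree zero. The finite direct sum decomposition then ensures that $\Gamma(S;I^\bullet\otimes_\shro\shr/\h^{n+1}\shr)$ also has its cohomology concentrated in degree zero, equal to $\shm(S)[\h]/\h^{n+1}$. Next I would commute $\Gamma(S;-)$ with $\varprojlim_n$: for open $S$ this is automatic from the universal property of limits, while for compact $S$ in a locally compact $X$ one uses $\Gamma(S;F)=\varinjlim_{U\supset S}\Gamma(U;F)$ together with the interchange of the directed colimit over $U$ with $\varprojlim_n$, which is valid because the inverse systems at hand satisfy Mittag-Leffler. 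The same surjectivity of transitions gives $R^1\varprojlim=0$, both for the complexes and for their degree-zero cohomologies, so cohomology commutes with $\varprojlim_n$ and one obtains $H^0(\Gamma(S;(I^\bullet)^\h))\simeq\shm(S)^\h=\shm^\h(S)$ and $H^j(\Gamma(S;(I^\bullet)^\h))=0$ for $j>0$.

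Since $\mathcal{B}$ is a basis, this presheaf computation on $\mathcal{B}$ determines the cohomology sheaves: $H^0((I^\bullet)^\h)\simeq\shm^\h$ and $H^j((I^\bullet)^\h)=0$ for $j>0$. Hence the canonical morphism $\shm^\h\to\shm^\rhb$ is an isomorphism in $\Derb(\shr)$, which is exactly the required acyclicity. I expect the main obstacle to be the commutation of $\Gamma(S;-)$ with $\varprojlim_n$ in the compact case, where sections involve an extra directed colimit over open neighborhoods; once this commutation is established, everything else reduces to bookkeeping with split finite sums and a standard Mittag-Leffler argument.
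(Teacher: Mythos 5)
This statement is quoted from~\cite[Proposition~2.5]{DGS}; the paper gives no proof of its own, so your argument has to stand on its own. The case where $\mathcal{B}$ is a basis of \emph{open} subsets is handled correctly: $\shr/\h^{n+1}\shr$ is $\shro$-free of finite rank, sections over an open set commute with $\varprojlim_n$ because limits of sheaves are computed sectionwise on opens, and the degreewise (split) surjectivity of the transition maps kills the $\varprojlim^{1}$ term in the Milnor sequence, giving $H^j(\Gamma(U;(I^\bullet)^\h))\simeq\prod_{n}H^j(U;\shm)$, which vanishes for $j>0$ when $U\in\mathcal{B}$ and equals $\shm^\h(U)$ for $j=0$.

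The compact case, however, contains a genuine gap: the interchange $\varinjlim_{U\supset S}\varprojlim_n\Gamma(U;\scbul)\simeq\varprojlim_n\varinjlim_{U\supset S}\Gamma(U;\scbul)$ that you invoke is false in general, and Mittag--Leffler is not a justification for it --- Mittag--Leffler controls $\varprojlim^{1}$, not the commutation of a filtered colimit with a countable inverse limit. Concretely, $\varprojlim_n\Gamma(U;I^j)^{\oplus(n+1)}=\prod_{n\geq 0}\Gamma(U;I^j)$, so the left-hand side consists of sequences of sections defined on a \emph{common} open neighborhood of $S$, while the right-hand side consists of arbitrary sequences of germs along $S$; the comparison map is in general neither injective nor surjective (think of germs $g_n$ whose maximal domains of definition shrink as $n$ grows). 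The correct route in the compact case avoids $\Gamma(S;\scbul)$ of the complex altogether: compute the stalk
$H^j\bigl((I^\bullet)^\h\bigr)_x=\varinjlim_{U\ni x}H^j\bigl(\Gamma(U;(I^\bullet)^\h)\bigr)=\varinjlim_{U\ni x}\prod_{n}H^j(U;\shm)$
(filtered colimits are exact, and countable products of abelian groups are exact), and then use that the sets $S\in\mathcal{B}$ with $x$ in the interior of $S$ are cofinal in the system of all neighborhoods of $x$: an element of the colimit, represented over some open $U\ni x$, already maps to zero in $\prod_{n}H^j(S;\shm)=0$ at such a stage $S\subset U$, hence vanishes. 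This cofinality argument is exactly where the hypothesis on compact sets enters, and it cannot be replaced by your interchange. Note that the compact case is the one the paper actually needs, since Proposition~\ref{VB} is applied with $\mathcal{B}$ a family of compact Stein subsets in Proposition~\ref{prop duality2}.
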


From now on assume that $\shro$ is an $\h$-acyclic $\C_X$-algebra. 

\begin{lemma}\label{Aux2} 
Let $f:X\to Y$ be a morphism of complex manifolds and assume that 
$(\opb f \shro)^\h\simeq \opb f\shr$. Then, for each $\shn\in \Derb(\opb f \shro)$, 
we have a canonical morphism in $\Derb(\shr)$:
\eqn
\reim f(\shn^\rhb) \to \reim f(\shn)^\rhb.
\eneqn
\end{lemma}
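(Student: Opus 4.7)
The plan is to reduce both sides of the desired morphism to a $\rhom$-form by means of Proposition~\ref{chc0}, and then to construct the canonical map through the standard \emph{evaluation / projection formula / adjunction} pattern.

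First, I would invoke Proposition~\ref{chc0} on the target side. Since $\shro$ is $\h$-acyclic by standing assumption, we obtain
\eqn
(\reim f F)^\rhb \simeq \rhom[\shro](\shr^\loc/\h\shr,\reim f F).
\eneqn
For the source side, the hypothesis $(\opb f\shro)^\h\simeq \opb f\shr$ is designed precisely so that the same proposition applies to $\opb f\shro$ on $X$, yielding
\eqn
F^\rhb \simeq \rhom[\opb f\shro]\bigl((\opb f\shr)^\loc/\h\,\opb f\shr,\,F\bigr).
\eneqn
Since $\opb f$ is exact and commutes with localization, $(\opb f\shr)^\loc/\h\,\opb f\shr$ may be identified with $\opb f(\shr^\loc/\h\shr)$ as an $(\opb f\shro,\opb f\shr)$-bimodule on $X$.

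With these reformulations, it suffices to construct, for any $G\in\Derb(\shro)$, a canonical morphism in $\Derb(\shr)$
\eqn
\reim f\rhom[\opb f\shro](\opb f G,F)\to\rhom[\shro](G,\reim f F),
\eneqn
and then specialize to $G=\shr^\loc/\h\shr$. The construction of this map is classical: start from the evaluation
\eqn
\opb f G\lltens[\opb f\shro]\rhom[\opb f\shro](\opb f G,F)\to F,
\eneqn
apply $\reim f$, invoke the projection formula
$\reim f(\opb f G\lltens[\opb f\shro]H)\simeq G\lltens[\shro]\reim f H$,
and conclude by tensor-hom adjunction.

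The step that requires attention is not the construction of the map per se, but the verification that the resulting morphism genuinely lies in $\Derb(\shr)$ rather than merely in $\Derb(\C)$ or $\Derb(\shro)$. The $\shr$-module structures on both sides come from the right $\shr$-action on $\shr^\loc/\h\shr$ (respectively on its inverse image), and one must track this action through each of the three steps above—evaluation, projection formula, and adjunction. This is a routine but slightly tedious bimodule bookkeeping; no genuine obstacle arises, since each of these standard morphisms is compatible with the right action on the first argument of $\rhom$.
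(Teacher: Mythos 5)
Your proposal is correct and follows essentially the same route as the paper: both sides are rewritten via Proposition~\ref{chc0} (using the hypothesis $(\opb f\shro)^\h\simeq\opb f\shr$ to apply it over $\opb f\shro$ and to identify $(\opb f\shr)^\loc/\h\,\opb f\shr$ with $\opb f(\shr^\loc/\h\shr)$), and the map is then the standard morphism $\reim f\rhom[\opb f\shro](\opb f G,F)\to\rhom[\shro](G,\reim f F)$. The only difference is that the paper cites this morphism directly (via (2.6.26) of \cite{KS1}, composed with the adjunction unit $G\to\roim f\opb f G$), whereas you reconstruct it by hand from evaluation, projection formula and tensor-hom adjunction.
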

\begin{proof}
Using Proposition 2.1 of~\cite{DGS} and formula (2.6.26) of~\cite{KS1} we have in $\Derb(\shr)$: 
\begin{eqnarray*} 
\reim f(\shn^\rhb)&\simeq&\reim f \rhom[f^{-1}\shro](f^{-1}(\shr^\loc/\hbar\shr), \shn)\\ 
&\to& \rhom[\shro](\roim f \opb f(\shr^\loc/ \h \shr), \reim f \shn)\\
&\to& \rhom[\shro] (\shr^\loc/\h\shr, \reim f \shn) \simeq (\reim f \shn)^{\rhb}.
\end{eqnarray*}
\end{proof} 


Consider also the right exact functor that maps each $\shm\in\md[\shr]$ into $\shm/\h\shm\in\md[\shro]$. 
We shall use its left derived functor
\eqn
\gr:\Derb(\shr)\to\Derb(\shro), \quad \shm\to \gr(\shm)\eqdot \shm\lltens[\shr]\shro
\eneqn
which was studied in detail in~\cite{KS3}.
Recall that $\gr$ commutes with tensor products,    
with $\rhom$ and also with direct images, 
proper direct images and inverse images of sheaves. 
Recall also that $\shm\in\md[\shr]$ has no $\hbar$-torsion 
if $\gr(\shm)$ is concentrated in degree $0$, that is, if $\h:\shm\to\shm$ is injective.

One says that $\shm\in\md[\shr]$ is $\hbar$-complete if the canonical morphism 
$\shm\to \varprojlim_n \shm/\hbar^{n+1}\shm$ is an isomorphism.
Set $\shr^\loc\eqdot\C((\h))_X\tens[\coro_X]\shr$, where $\C((\h))$ 
denotes the field of Laurent series with complex coefficients.  
One says that $\shm\in\Der(\shr)$ is cohomologically complete if 
$\rhom[\shr](\shr^\loc,\shm)=0$. 
The notions of $\h$-complete object and cohomologically complete object don't 
depend on the base ring. 

We refer~\cite{KS3} for a comprehensive study of cohomologically complete objects. 
Let us just mention some facts that we shall use in the sequel: 
$\gr$ is conservative when restricted to the triangulated subcategory of 
$\Derb(\shr)$ consisting of cohomologically complete objects (cf. Corollary 1.5.9 of~\cite{KS3}) and 
$\shn^\rhb$ is cohomologically complete for every $\shn\in\Derb(\shro)$ (cf. Proposition 1.2 of~\cite{DGS}). 

In view of Proposition~\ref{VB} below let us fix a morphism of complex analytic manifolds $f:X\to Y$ 
and let $\shro$ be an $\hbar$-acyclic $\coro$-algebra on $Y$. 

\begin{remark}\label{remark1}
There is a canonical morphism of sheaves of $\coro$-algebras $f^{-1}\shr=f^{-1}\shro^\h\to (f^{-1}\shro)^\h$ induced by the morphisms 
\eqn
f^{-1}\shro^\h \to f^{-1}(\shro\tens \coro_Y/\h^{n+1}\coro_Y)\simeq f^{-1}\shro\tens \coro_X/\h^{n+1} \coro_X,
\eneqn
and by the universal property of projective limits. 
Hence, there is a canonical functor $F\in\md[(f^{-1}\shro)^\h]\mapsto F\in\md[f^{-1}(\shro^\h)]$. 
\end{remark}



\begin{assumption}\label{Assumption R0}
There exists a basis $\mathcal{B}$ either of open subsets of $Y$ or of compact subsets of $Y$ 
such that $H^j(S;\shro)=0$, for all $j>0$ and for all $S\in\mathcal{B}$.
\end{assumption}



\begin{proposition}\label{VB} 
Suppose that $f:X\to Y$ is smooth and that $\shro$ verifies Assumption~\ref{Assumption R0}. Then $\opb f \shr \isoto (\opb f \shro)^h$.
\end{proposition}
\begin{proof}
Consider the canonical morphism from Remark~\ref{remark1}. 
Note that the corresponding canonical morphism $\gr(\opb f \shr)\to\gr((\opb f \shro)^\h)$ is an isomorphism, since one has $\gr(\opb f \shr)\simeq\gr((\opb f \shro)^\h)\simeq \opb f \shro$.
Hence, in view of~\cite[Cor. 1.5.9]{KS3}, it suffices to show that both $\opb f\shr$ and $(\opb f \shro)^\h$ are cohomologically complete objects.

The ring $\shr\simeq\shro^\rhb$ is cohomologically complete. 
Since $f$ is smooth, $\shr$ is non-characteristic for $f$ and $\opb f \shr$ is cohomologically complete by Proposition 1.14 of~\cite{MMFR}. 

We shall use Proposition~\ref{chc5} to prove that $(\opb f\shro)^\rhb$ is concentrated in degree 0, 
thus $(\opb f\shro)^\h$ is cohomologically complete.

The result is now checked locally, so we can assume that $X=X^\prime \times Y$ and that $f:X\to Y$ is the canonical projection.

Let us consider the case where $\mathcal{B}$ is a basis of open subsets of $Y$. 
It is enough to show that there exists a basis $\mathcal{B}^\prime$ of open subsets of $X^\prime\times Y$ such that $H^j(S^\prime,\opb f \shro)=0$ for all $j>0$ and all $S^\prime\in\mathcal{B}^\prime$. 

Consider the basis $\mathcal{B}^\prime$ formed by the open sets $V^\prime\times V\subset X^\prime\times Y$ 
such that $V^\prime$ is an open ball of $X^\prime$ (hence, contractible) and $V\in\mathcal{B}$. 
We are in the conditions to apply 
~\cite[Proposition 2.7.8]{KS1}.
Hence, for any $j>0$ one has: 
\eqn
H^j(V^\prime\times V,f^{-1}\shro)&\simeq &H^j(f_{|_{V^\prime\times V}}^{-1}(V),f_{|_{V^\prime\times V}}^{-1}\shro|_V) \simeq H^j(V,\shro|_V)=0.
\eneqn

The case where $\mathcal{B}$ is a basis of compact subsets of $X$ is similar, taking closed balls on $X^\prime$ instead of open balls. 
\end{proof}

\paragraph{$\R$-constructible sheaves of $\C[[\h]$-modules.}
As usual denote by $\SSi(F)$ the micro-support of an object $F\in\Derb(\coro_X)$, 
a closed involutive subset of the cotangent bundle $T^\ast X$. 
One has the estimative $\SSi(\gr(F))\subset\SSi(F)$. 
The equality $\SSi(\gr(F))=\SSi(F)$ holds if $F$ is 
cohomologically complete (cf. Proposition 1.15 of~\cite{DGS}), then in such case one also has $\supp(F)=\supp(\gr(F))$. 

Let $\cora$ be either $\C$ or $\coro$. 
One denotes by $\Derb_\Rc(\cora_X)$ 
the bounded derived category of sheaves of $\cora$-modules 
with $\R$-constructible cohomology. 

Consider the constant map $a_X:X\to\rmptt$ and denote by $\omega_X^\cora$ the dualizing sheaf in the category 
$\Derb(\cora_X)$. Recall that one has $\omega_X^\cora\simeq \epb{a_X}\cora \simeq \cora_X[2d_X]$. 

We shall use the duality functors below: 
\eqn
&&\RDD_{\cora_X}:\Derb(\cora_X)\to \Derb(\cora_X), \quad F\mapsto \rhom[\cora_X](F,\cora_X),\\
&&\RD_{\cora_X}:\Derb(\cora_X)\to\Derb(\cora_X), \quad F\mapsto \rhom[\cora_X](F,\omega_X^\cora).
\eneqn
They induce functors $\RDD_{\cora_X},\RD_{\cora_X}:\Derb_\Rc(\cora_X)\to\Derb_\Rc(\cora_X)$ which satisfy the microlocal relation: 
$\SSi(\RDD_{\cora_X} F)=\SSi(\RD_{\cora_X} F)=\SSi(F)^a$, where $a$ denotes the opposite map on $T^\ast X$.

Any $F\in\Derb_\Rc(\coro_X)$ is cohomologically complete by Proposition 1.6 of~\cite{DGS}. 
Hence, $\gr:\Derb_\Rc(\coro_X)\to\Derb(\C_X)$ is conservative and preserves the micro-support. 
It is also known that $\R$-constructible sheaves are $\h$-acyclic (Corollary 2.6 of~\cite{DGS}).

The next proposition shows that 
$F^\h\tens[\coro_X]\scbul$ is an exact functor in $\md[\coro_X]$ for each $F\in\mdrc[\C_X]$.

\begin{proposition}\label{P29}
Let $F \in \Derb_\Rc(\C_X)$. 
Then we have $F^\rhb \simeq F \tens \coro_X$. 
\end{proposition}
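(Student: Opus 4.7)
The plan is to apply the conservativity criterion for $\gr$ on cohomologically complete objects, Proposition~\ref{chc1}. Since $F\in\Derb_\Rc(\C_X)$ is $(\scbul)^\h$-acyclic (by the remark preceding the proposition), one has $F^\rhb\simeq F^\h=\varprojlim_n F\tens\coro_X/\h^{n+1}\coro_X$. The projections $\coro_X\to\coro_X/\h^{n+1}\coro_X$ induce a compatible family of morphisms $F\tens\coro_X\to F\tens\coro_X/\h^{n+1}\coro_X$, and the universal property of the projective limit produces a canonical morphism $\alpha\colon F\tens\coro_X\to F^\rhb$ in $\Derb(\coro_X)$. It will suffice to verify that both source and target are cohomologically complete and that $\gr(\alpha)$ is an isomorphism.

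For cohomological completeness, the target $F^\rhb$ is covered by Proposition~\ref{chc3}. For the source, flatness of $\coro$ over $\C$ gives $H^k(F\tens\coro_X)\simeq H^k(F)\tens\coro_X$; using the same subanalytic stratification that witnesses the $\R$-constructibility of $F$, the stalks $H^k(F)_x\tens\coro$ are finitely generated over $\coro$. Hence $F\tens\coro_X$ lies in $\Derb_\Rc(\coro_X)$, and Proposition~\ref{P25} supplies cohomological completeness.

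For the graded comparison, the natural identification $\gr(F\tens\coro_X)\simeq F\lltens[\C_X]\C_X\simeq F$ is immediate from the associativity of the derived tensor product. On the other side, inspection of sections shows that multiplication by $\h$ is injective on $F^\h$, so using the free resolution $[\coro_X\xrightarrow{\h}\coro_X]\to\C_X$ one computes $\gr(F^\rhb)\simeq F^\h/\h F^\h\simeq F$. A routine diagram chase identifies $\gr(\alpha)$ with the identity of $F$ under these identifications, and Proposition~\ref{chc1} then concludes.

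I expect the main obstacle to be the cohomological completeness of $F\tens\coro_X$: infinite products do not commute with tensor products in general, so this property is not formal, and its proof hinges on the preservation of $\R$-constructibility under the flat extension $\C\to\coro$.
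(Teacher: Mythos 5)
Your proof is correct, but it follows a genuinely different route from the paper. The paper takes an almost free resolution $F^\scbul$ of $F$ (in the sense of the Appendix of \cite{KS2}) by finite sums of sheaves $\C_{U}$ with $U$ subanalytic relatively compact, uses $\h$-acyclicity termwise to identify $F^\rhb$ with the complex $F^{\scbul,\h}$, and then reduces everything to the stalkwise check $\C_U\tens\coro_X\isoto\coro_U$; it never invokes cohomological completeness. You instead build the canonical arrow $F\tens\coro_X\to F^\rhb$, verify that both sides are cohomologically complete (the target by Proposition~\ref{chc3}, the source by observing that $F\tens\coro_X\in\Derb_\Rc(\coro_X)$ and appealing to Proposition~\ref{P25}), check that $\gr$ of the arrow is the identity of $F$, and conclude by conservativity (Proposition~\ref{chc1}). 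This is valid and fits the pattern used elsewhere in the paper (e.g.\ Proposition~\ref{VB}, Theorem~\ref{T54}), and your observation that the real content sits in the cohomological completeness of $F\tens\coro_X$ is accurate. Two remarks on what each approach costs: your argument outsources the hard part to Proposition~\ref{P25}, a nontrivial result of~\cite{DGS}, whereas the paper's proof is self-contained modulo the existence of almost free resolutions; and you do not fully escape the resolution step either, since for a genuine complex $F$ the identifications $F^\rhb\simeq F^\h$ and $\gr(F^\rhb)\simeq F$ (your Koszul computation is literally valid only for a single $\h$-torsion-free sheaf $F^\h$) still require replacing $F$ by a complex of $(\scbul)^\h$-acyclic, $\h$-torsion-free terms --- which is exactly the almost free resolution again. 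With that caveat made explicit, your argument is complete.
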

\begin{proof}
First we remark that by replacing $F$ with an almost free resolution of $F$ (in the sense of the Appendix of~\cite{KS2}) 
one easily reduces the proof to the case $F=\C_U$, $U$ being an open subanalytic relatively compact subset of $X$. 
This reduction uses the fact that $\R$-constructible sheaves are $\hbar$-acyclic. 




Therefore, it is enough to note that 
for each open subanalytic subset $U$ we have a canonical isomorphism:
\eqn
\C_U \tens \coro_X \isoto \coro_U.
\eneqn
In fact, the stalks $(\C_U \tens \coro_X)_x$ and $(\coro_U)_x$ 
are both isomorphic to $\coro$ if $x \in U$, and both vanish if $x\notin U$.
\end{proof}

As a consequence of Proposition~\ref{P29} above and Lemma 1.9 of~\cite{DGS}, we have:

\begin{corollary}\label{T31} 
For $F \in \Derb_\Rc(\C_X)$ and $G \in \Derb(\C_X)$, 
there are isomorphisms in $\Derb(\coro_X)$:
\eqn\label{rhom+rhb1}
&(i)& \rhom[\coro_X](F^\rhb,G^\rhb) \simeq (\rhom(F,G))^\rhb\\
&(ii)& F^\rhb\lltens[\coro_X]G^\rhb \simeq F\tens G^\rhb.
\eneqn
\end{corollary}



\section{Elliptic pairs over $\C[[\h]]$}

\paragraph{$\shd[[\h]]$-modules.}

Let $\sho_X$, $\Omega_X$ and $\shd_X$ denote the ring of holomorphic functions on $X$, 
the sheaf of holomorphic forms of maximal degree on $X$, and  
the ring of linear differential holomorphic operators on $X$, respectively.

As usual, $\mdgd[\D]$ denotes the full subcategory of 
$\mdc[\D]$ consisting of good $\D$-modules (in the sense of~\cite{Ka2})
and $\Derb_\gd(\D)$ denotes the full subcategory of $\Derb_\coh(\D)$
consisting of objects with good cohomology. 

An object $\shm\in\Derb_\coh(\Dh)$ is said to be good if 
$\gr(\shm)\in \Derb_\gd(\D)$. 
Denote by $\Derb_\gd(\Dh)$ the full triangulated subcategory 
of $\Derb_\coh(\Dh)$ consisting of good objects.

We shall use the duality functors below: 
\eqn
&&\ul{\RD}_{\h,X}:\Derb({(\Dh)}^\rop)\to\Derb({(\Dh)}^\rop),\quad  
\shm\mapsto\rhom[\Dh](\shm,\Omega_X[d_X] \tens[\sho_X] \Dh) \\
&&\RDD_{\h,X}:\Derb(\Dh)\to \Derb({(\Dh)}^\rop), \quad 
\shm\mapsto\rhom[\Dh](\shm,\shd_X).
\eneqn
Denote also by $\ul{\RD}_X$ and $\RDD_X$ their counterparts for $\shd$-modules.

The rings $\shd^\h_X$ and ${\shd_X^\h}^\rop$ are algebras of formal deformation in the sense of~\cite{KS3} 
and the machinery developed in loc. cit. apply to the study of $\shd_X^\h$-modules. 
In particular, objects belonging to $\Derb_\coh(\shd_X^\h)$ are cohomologically complete and 
Theorem 1.6.4 of~\cite{KS3} provide a useful coherence criteria for $\shd_X^\h$-modules. 
Recall also that coherent $\shd_X$-modules are $\hbar$-acyclic (cf. Corollary 2.6 of~\cite{DGS}).

In the sequel most of our results are stated for right 
$\shd_X^\hbar$-modules but one can 
get similar results for left $\shd_X^\hbar$-modules. 
Indeed, the category of left $\shd_X^\h$-modules and the category of right $\shd_X^\h$-modules 
are equivalent. Hence, we don't need to distinguish left and right $\shd^\h$-modules.


\paragraph{Direct images.}
From now on, $f:X\to Y$ denotes a complex morphism between 
two complex analytic manifolds of dimensions $d_X$ and $d_Y$, respectively.

Consider the usual transfer-module 
$\shd_{X\to Y}\eqdot \sho_X\tens[f^{-1}\sho_Y] f^{-1}\shd_Y$ with its structure of 
$(\shd_X,f^{-1}\shd_Y)$-bimodule.  
Denote by $\doim f$ and $\deim f$ the functors of direct image and proper direct image in the $\shd$-modules framework.

Set 
\eqn
\shd_{X\to Y,\hbar}\eqdot \varprojlim_n (\sho_X\tens[f^{-1}\sho_Y] f^{-1}(\shd_Y^\h/\h^{n+1}\shd_Y^\h)).
\eneqn
Since each component of the projective limit has a natural structure of $\h$-torsion $(\shd_X^\hbar,f^{-1}\shd_Y^\hbar)$-bimodule, 
then $\shd_{X\to Y, \hbar}$ is also a $(\shd_X^\hbar,f^{-1}\shd_Y^\hbar)$-bimodule. 
Hereafter, when there is no risk of confusion, we use the following abbreviations: 
$\shk_\h\eqdot\shd_{X\to Y,\h}$ and $\shk\eqdot\shd_{X\to Y}$.

Note that $\shk^\hbar$ is also a $(\shd_X^\hbar,f^{-1}\shd_Y^\hbar)$-bimodule (cf. Remark~\ref{remark1}) and one
has an isomorphism of bimodules $\shk_\h \simeq \shk^\hbar$. 
Moreover, $\shk_\h\simeq\shk^\rhb$ is cohomologically complete, free of $\hbar$-torsion and $\gr(\shk_\h)\simeq\shk$ 
(Proposition 4.5 of~\cite{MMFR}).



The direct image functor and the proper direct image functor in the framework 
of $\shd^\hbar$-modules are denoted respectively by $\doimh f$ and $\deimh f$ and defined by: 
\eqn
&\doimh f&:\Derb(\shd_X^\hbar)\to \Derb(\shd_Y^\hbar), \quad
\shm\mapsto \roim f(\shm\lltens[\shd_X^\hbar]\shk_\h); \\
&\deimh f&:\Derb(\shd_X^\hbar)\to \Derb(\shd_Y^\hbar), \quad 
\shm\mapsto \reim f(\shm\lltens[\shd_X^\hbar]\shk_\h).
\eneqn
Direct images for $\shd^\hbar$-modules are introduced in~\cite{MMFR} 
(where the transfer module in the $\h$-setting is simply denoted by $\mathcal{K}$).

Clearly, for $\shm\in\Derb(\shd_X^\hbar)$ the isomorphisms $\gr(\doimh f(\shm))\simeq \doim f(\gr(\shm))$ 
and $\gr(\deimh f(\shm))\simeq\deim f(\gr(\shm))$ hold.





\paragraph{$f$-characteristic variety.} 

Following~\cite{ScS}, the $f$-characteristic variety of $\shm\in\Derb_\coh(\shd_X)$, denoted by $\chv_f(\shm)$, 
is a closed conic analytic subvariety of $T^\ast X$ depending on $f$ and satisfying the formula
\eq\label{estimative}
\SSi(\shm\lltens[\shd_X]\shk)\subset \chv_f(\shm).
\eneq

Note that $\chv_{a_X}$ is the usual characteristic variety of $\shm$, denoted simply by $\chv(\shm)$.
In this case, the inclusion~\eqref{estimative} gives the well-known estimative: 
$\SSi(\Sol(\shm))\subset \chv(\shm)$. 
Here $\Sol$ denotes the solutions functor in the $\shd$-module framework, 
whose counterpart in the $\shd^\hbar$-modules framework is the functor $\Sol_\h$ studied in~\cite{DGS}: 
\eqn
\Sol_\h:\Derb_\coh(\shd_X^\hbar)^\rop\to \Derb(\coro_X), \quad \shm\mapsto \rhom[\Dh](\shm,\sho_X^\h).
\eneqn

\begin{definition}\label{D39}
The $f$-characteristic variety of $\shm\in\Derb_\coh(\Dh)$ is denoted by 
$\chv_{f,\h}(\shm)$ and defined by $\chv_{f,\h}(\shm)\eqdot\chv_f(\gr(\shm))$. 
\end{definition}

For any $\shm\in\Derb_\coh(\shd^\hbar_X)$, $\chv_{a_X,\h}(\shm)$ coincides 
with the characteristic variety of $\shm$ denoted by $\chv_\h(\shm)$.


\begin{lemma}\label{regularity}
For any $\shm\in\Derb_\coh(\Dh)$, we have $\SSi(\shm\lltens[\Dh] \shk_\h)\subset \chv_{f,\h}(\shm)$.
\end{lemma}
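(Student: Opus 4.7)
The plan is to reduce the statement to the classical estimate \eqref{estimative} for ordinary $\shd_X$-modules by using cohomological completeness to transfer information between $\shm\lltens[\Dh]\shk_\h$ and its graded.

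First I would establish that $\shm\lltens[\Dh]\shk_\h$ is a cohomologically complete object of $\Derb(\coro_X)$. This is an immediate application of Lemma~\ref{L75}: the transfer module $\shk_\h$ is cohomologically complete (as recalled just after its definition, citing \cite[Proposition~4.5]{MMFR}) and $\shm$ is coherent over $\Dh$; using the equivalence of left and right module categories from Remark~\ref{equivalence} if the sidedness needs to be swapped, the lemma applies directly.

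Next, since $\gr$ commutes with derived tensor products and $\gr(\shk_\h)\simeq \shk$, I would write the chain
\[
\gr(\shm\lltens[\Dh]\shk_\h) \simeq \gr(\shm)\lltens[\shd_X]\gr(\shk_\h) \simeq \gr(\shm)\lltens[\shd_X]\shk.
\]
Applying the classical estimate~\eqref{estimative} to the coherent $\shd_X$-module $\gr(\shm)$ then yields
\[
\SSi(\gr(\shm)\lltens[\shd_X]\shk)\subset \chv_f(\gr(\shm)) = \chv_{f,\h}(\shm),
\]
where the last equality is just Definition~\ref{D39}.

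Finally, I would combine these by invoking the fact recalled at the start of Section~\ref{S3}: for cohomologically complete objects the micro-support agrees with that of the graded. This turns the inclusion on the graded into the desired inclusion on $\shm\lltens[\Dh]\shk_\h$ itself. There is no real obstacle here; the only point requiring a moment of care is verifying the hypotheses of Lemma~\ref{L75} (coherence and the correct sidedness for the tensor product), and making sure the commutation of $\gr$ with $\lltens$ is being used with the correct base rings, but both are routine given what has been assembled earlier in the paper.
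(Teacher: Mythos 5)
Your proposal is correct and follows exactly the paper's own argument: cohomological completeness of $\shm\lltens[\Dh]\shk_\h$ via Lemma~\ref{L75}, the equality $\SSi(\shm\lltens[\Dh]\shk_\h)=\SSi(\gr(\shm)\lltens[\shd_X]\shk)$ for cohomologically complete objects, and reduction to the classical estimate~\eqref{estimative} applied to $\gr(\shm)$. The only difference is that you spell out the intermediate steps (commutation of $\gr$ with the derived tensor product, the sidedness check) which the paper leaves implicit.
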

\begin{proof}
Since $\shm$ is coherent and $\shk_\h$ is cohomologically complete, 
$\shm\lltens[\Dh]\shk_\h$ is also cohomologically complete by Proposition 1.6.5 of~\cite{KS3}. 
Hence, $$\SSi(\shm\lltens[\Dh]\shk_\h)=\SSi(\gr(\shm)\lltens[\shd_X]\shk).$$
Therefore, the result follows from estimative~\eqref{estimative}. 
\end{proof}

We remark that $\SSi(\Sol_\h(\shm))=\chv_\h(\shm)$ is already proved in~\cite{DGS}.

\paragraph{Elliptic pairs.}

\begin{definition}\label{D48}
A pair $(\shm,F)$ with
$\shm\in\Derb_\coh({\shd_X^\h}^\rop)$ and $F\in\Derb_\Rc(\coro_X)$ 
is an $f$-elliptic pair over $\coro$ if
$\chv_{f,\h}(\shm)\cap \SSi(F)\subset T^\ast_X X$.
If in addition $\shm\in\Derb_\gd({\shd_X^\h}^\rop)$, then $(\shm,F)$ is said a good $f$-elliptic pair over $\coro$.
The support of the pair $(\shm,F)$ is the intersection $\supp(\shm) \cap \supp(F)$.
\end{definition}

Since $\gr$ preserves the micro-support of $\R$-constructible 
sheaves and the characteristic variety of coherent $\shd^\h$-modules we have:

\begin{proposition}\label{P1}   
A pair $(\shm,F)$ is an $f$-elliptic pair over $\coro$ 
if and only if $(\gr(\shm),\gr(F))$ 
is an $f$-elliptic pair over $\C$ (in the sense of~\cite{ScS}). 
\end{proposition}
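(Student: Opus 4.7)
The plan is to verify that this is a direct tautological consequence of three facts the paper has already set up: (i) the definition of the $\h$-characteristic variety in terms of the graded object (Definition~\ref{D39}), (ii) the agreement of micro-supports $\SSi(F)=\SSi(\gr(F))$ for cohomologically complete sheaves, recalled at the beginning of Section~\ref{S3}, and (iii) the automatic cohomological completeness of $\R$-constructible sheaves of $\coro$-modules (Proposition~\ref{P25}).

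First I would unpack Definition~\ref{D48}: the pair $(\shm,F)$ is $f$-elliptic over $\coro$ precisely when
\[
\chv_{f,\h}(\shm)\cap \SSi(F)\subset T^\ast_X X,
\]
while $(\gr(\shm),\gr(F))$ is $f$-elliptic in the sense of~\cite{ScS} precisely when
\[
\chv_f(\gr(\shm))\cap \SSi(\gr(F))\subset T^\ast_X X.
\]
So the goal is just to show that these two conic subsets of $T^\ast X$ coincide.

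Second, the equality of the left-hand factors is built into Definition~\ref{D39}, which sets $\chv_{f,\h}(\shm)\eqdot \chv_f(\gr(\shm))$. For the right-hand factors, since $F\in\Derb_\Rc(\coro_X)$, Proposition~\ref{P25} gives that $F$ is cohomologically complete; the micro-support statement recalled in Section~\ref{S3} (citing~\cite[Proposition~1.15]{DGS}) then yields $\SSi(F)=\SSi(\gr(F))$. Substituting both equalities shows the two ellipticity conditions are literally the same.

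There is essentially no obstacle here; the proposition is a bookkeeping consequence of the definitions combined with the already-proved micro-support comparison. The only point to be careful about is to cite the right hypothesis: the equality $\SSi(F)=\SSi(\gr(F))$ requires cohomological completeness of $F$, and it is exactly the $\R$-constructibility assumption (via Proposition~\ref{P25}) that supplies this automatically, so no extra condition on $F$ is needed in the statement.
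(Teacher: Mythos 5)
Your proof is correct and follows exactly the paper's own (very brief) argument: the equality of the $\shd$-module sides is Definition~\ref{D39} itself, and the equality of micro-supports follows from $\R$-constructibility via cohomological completeness (Proposition~\ref{P25} together with the micro-support comparison recalled in Section~\ref{S3}). Your write-up simply spells out the bookkeeping that the paper compresses into one sentence.
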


If $(\shm,F)$ is an $f$-elliptic pair over $\coro$, then $(\ul{\RD}_{\h,X}\shm,\RDD_{\coro_X} F)$ is also 
an $f$-elliptic pair over $\coro$, the dual $f$-elliptic pair of $(\shm,F)$. 

Assume that $X$ is the complexification of a real analytic manifold $M$. 

\begin{definition}\label{def:elliptic}
We say that a coherent $\shd_X^\h$-module $\shm$ is an elliptic $\Dh$-module if 
$(\shm,\coro_M)$ is an $a_X$-elliptic pair over $\coro$. 
We say that an operator $P\in\shd_X^\h$ is an elliptic operator if $\Dh/ \Dh P$ is an elliptic $\Dh$-module.
\end{definition}

In other words, $\shm\in\mdc[\shd_X^\h]$ is elliptic if $\chv_\h(\shm)\cap T^\ast_M X\subset T^\ast_X X$. 
Moreover, one deduces from Lemma 3.5 of~\cite{DGS} that $P\in\Dh$ is elliptic 
if and only if it is locally written as $P=P_0+\h P'$ for some $P'\in\shd_X^\h$ and 
$P_0$ an elliptic operator in the classical sense. 
Take a system of holomorphic coordinates $(x;\eta)$ on $T^\ast X$. 
$P_0$ is elliptic if its principal symbol satisfies $\sigma(P_0)((x;i\eta))\neq 0$ for $\eta\neq 0$.
Take, for example, $X=\C^n$, $M=\R^n$ and denote by $\Delta$ 
the Laplace operator. 
Then, $P=\Delta+\h P^\prime$ is elliptic for any $P^\prime\in\Dh$.

The meaning of elliptic pairs on the real analytic setting 
illustrates why one can regard the theory of elliptic pairs (over $\coro$) 
as a natural generalization of the theory of elliptic systems on real analytic manifolds.

\section{Theorems on elliptic pairs over $\C[[\h]]$}

\paragraph{Regularity theorem.}

\begin{theorem}\label{T53}
Let $(\shm,F)$ be an elliptic pair over $\coro$. 
Then, the natural morphism below is an isomorphism in $\Derb(\coro_X)$:
\eq\label{eq:regularity1}
F\lltens[\coro_X] (\shm\lltens[\shd_X^\h]\shk_\h) \to \rhom[\coro_X](\RDD_{\coro_X} F, \shm\lltens[\shd_X^\h]\shk_\h)
\eneq
\end{theorem}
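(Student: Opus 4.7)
My strategy is to prove that~\eqref{eq:regularity1} is an isomorphism by reducing to the analogous regularity theorem for classical $f$-elliptic pairs of Schapira--Schneiders~\cite{ScS}, via the graded functor $\gr$ combined with the conservativity result of Proposition~\ref{chc1}. The morphism~\eqref{eq:regularity1} is the specialization at $G = \shm \lltens[\shd_X^\h] \shk_\h$ of the canonical natural transformation of bifunctors $F \lltens G \to \rhom[\coro_X](\RDD_{\coro_X} F, G)$, so its construction is functorial in the relevant sense.

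First I would verify that both sides of~\eqref{eq:regularity1} are cohomologically complete. The target is: indeed, $\shk_\h \simeq \shk^\rhb$ is cohomologically complete by Proposition~\ref{chc3}; hence $\shm \lltens[\shd_X^\h] \shk_\h$ is cohomologically complete by Lemma~\ref{L75} (using that $\shm$ is coherent over ${\shd_X^\h}^\rop$); and Proposition~\ref{P7} then yields the claim. The source is more delicate and constitutes the main technical obstacle: one must exploit the $\R$-constructibility of $F$ by choosing an almost-free resolution $F^\scbul \to F$ (extending Proposition~\ref{P29} to $\coro_X$-coefficients), reducing to showing $\coro_U \lltens[\coro_X](\shm\lltens[\shd_X^\h]\shk_\h)$ is cohomologically complete for each subanalytic relatively compact open $U$ appearing, which can be handled through the triangle $\coro_U \to \coro_X \to \coro_Z \to +1$ with $Z = X \setminus U$ and Proposition~\ref{chc2} applied to the closed inclusion $Z \hookrightarrow X$.

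Once both sides are known to be cohomologically complete, I would apply $\gr$ to~\eqref{eq:regularity1}. Since $\gr$ commutes with $\lltens$ and $\rhom$, together with the identifications $\gr(\coro_X) \simeq \C_X$, $\gr(\shk_\h) \simeq \shk$, and (consequently) $\gr(\RDD_{\coro_X} F) \simeq \RDD_{\C_X} \gr(F)$, the graded morphism identifies with the canonical morphism
$$\gr(F) \lltens[\C_X] (\gr(\shm) \lltens[\shd_X] \shk) \to \rhom[\C_X](\RDD_{\C_X} \gr(F), \gr(\shm) \lltens[\shd_X] \shk).$$
By Proposition~\ref{P1}, $(\gr(\shm), \gr(F))$ is a classical $f$-elliptic pair, so this morphism is an isomorphism in $\Derb(\C_X)$ by the Schapira--Schneiders regularity theorem in~\cite{ScS}. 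Conservativity of $\gr$ on cohomologically complete objects (Proposition~\ref{chc1}) then forces~\eqref{eq:regularity1} itself to be an isomorphism, completing the proof.
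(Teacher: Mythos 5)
Your reduction-to-$\gr$ strategy is the natural one (it is how the paper handles Theorems~\ref{T54}, \ref{T55} and~\ref{T60}), and the target-side part of your argument is correct: $\shm\lltens[\shd_X^\h]\shk_\h$ is cohomologically complete by Lemma~\ref{L75}, hence so is $\rhom[\coro_X](\RDD_{\coro_X}F,\shm\lltens[\shd_X^\h]\shk_\h)$ by Proposition~\ref{P7}; the compatibility of $\gr$ with $\lltens$ and $\rhom$ and the appeal to Proposition~\ref{P1} and to \cite{ScS} are also fine. The genuine gap is the cohomological completeness of the \emph{source}. Your reduction to $\coro_U\lltens[\coro_X]G$ for $G=\shm\lltens[\shd_X^\h]\shk_\h$ leads, via the triangle $G_U\to G\to G_Z\to{+1}$, to the claim that $G_Z\simeq\roim{i}\opb i G$ is cohomologically complete; but Proposition~\ref{chc2} only lets you conclude this \emph{after} you know $\opb i G$ is cohomologically complete on $Z$, and inverse images do not preserve cohomological completeness in general --- that is exactly why Proposition~\ref{chc4} carries a non-characteristicity hypothesis, which fails for the conormal directions along the boundary of a general subanalytic $U$. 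So the statement you reduce to ($G_U$ cohomologically complete for every subanalytic open $U$) is unjustified and, absent an ellipticity condition on $(\shm,\coro_U)$, there is no reason for it to hold. Tellingly, the paper only obtains the cohomological completeness of $F\lltens[\coro_X](\shm\lltens[\shd_X^\h]\shk_\h)$ as a \emph{consequence} of Theorem~\ref{T53}, in the proof of the finiteness Theorem~\ref{T55}; your argument needs it as an input, which makes the order of deduction essentially circular.

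The paper's actual proof avoids completeness altogether: it only uses the micro-support estimate $\SSi(\shm\lltens[\shd_X^\h]\shk_\h)\subset\chv_{f,\h}(\shm)$ of Lemma~\ref{regularity} (this is where cohomological completeness enters, once, to reduce the estimate to the graded case), the equality $\SSi(\RDD_{\coro_X}F)=\SSi(F)^a$, and then applies \cite[Proposition~5.4.14]{KS1} directly. That proposition is valid for sheaves of modules over any commutative Noetherian ring of finite global dimension, in particular over $\coro$, so no passage through $\gr$ and conservativity is needed. If you want to keep your structure, the fix is precisely to replace the completeness-plus-conservativity step by this microlocal statement over $\coro$.
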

\begin{proof} 
The morphism~\eqref{eq:regularity1} is induced by the isomorphism 
$F\simeq \RDD_{\coro_X} \RDD_{\coro_X} F$ and by the canonical morphism:
\eqn
\RDD_{\coro_X} \RDD_{\coro_X} F \lltens[\coro_X] (\shm\lltens[\shd_X^\h] \shk_\h) 
\to \rhom[\coro_X] (\RDD_{\coro_X} F, \shm\lltens[\shd_X^\h]\shk_\h).
\eneqn
Note that $\RDD_{\coro_X} F$ has $\R$-constructible cohomology and $\SSi(\RDD_{\coro_X} F)=\SSi(F)^a$.
The transversality condition on the pair $(\shm,F)$ together with Lemma~\ref{regularity} entail:
\eqn
\SSi(\shm\lltens[\Dh]\shk_\h)\cap\SSi(\RDD_{\coro_X} F)^a \subset T^\ast_X X.
\eneqn
The conclusion follows by Proposition 5.4.14 of~\cite{KS1}.
\end{proof}

\begin{remark}\label{RT53} 
The isomorphism~\eqref{eq:regularity1} is written in the following form in terms of left $\shd_X^\h$-modules: 
\eqn
\rhom[\Dh](\shm,\RDD_{\coro} F \lltens[\coro_X]\shk_\h)
\isoto  \rhom[\Dh](\shm,\rhom[\coro_X](F,\shk_\h)).
\eneqn 
\end{remark}

We want to refine the regularity property in the case where $F$ 
is the formal extension of an object in $\Derb_\Rc(\C_X)$.
Let us start with some auxiliar results.

\begin{lemma}\label{L34}   
Let $F,G\in \md[\C_X]$.
There is a natural morphism in $\md[\coro_X]$:
\eq\label{morphism1}
F \tens G^\h\to (F\tens G)^\h.
\eneq
\end{lemma}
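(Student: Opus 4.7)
The plan is to construct the map using the universal property of the projective limit that defines $(-)^\h$. For each $n\geq 0$, the canonical projection $G^\h \to G\tens(\coro_X/\h^{n+1}\coro_X)$ is a morphism of $\coro_X$-modules. Tensoring over $\C_X$ with $F$ and using associativity of the tensor product, one obtains a morphism of $\coro_X$-modules
$$F\tens G^\h \to (F\tens G)\tens(\coro_X/\h^{n+1}\coro_X),$$
where the $\coro_X$-action on the source factors through $G^\h$ and on the target through the quotient of $\coro_X$.

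These morphisms are compatible as $n$ varies, since the factorization of the projection $G^\h\to G\tens(\coro_X/\h^{n+1}\coro_X)$ through $G\tens(\coro_X/\h^{n+2}\coro_X)$ is built into the definition of the projective limit. Therefore the universal property of $(F\tens G)^\h=\varprojlim_n (F\tens G)\tens(\coro_X/\h^{n+1}\coro_X)$ yields a canonical morphism $F\tens G^\h \to (F\tens G)^\h$ in $\md[\coro_X]$. The $\coro_X$-linearity is automatic, since each level of the compatible system is $\coro_X/\h^{n+1}\coro_X$-linear. Naturality in $F$ and $G$ is inherited from the naturality of each piece.

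There is no real obstacle in the proof; the statement is essentially a formal consequence of the definition of $(-)^\h$ as a projective limit together with the associativity of the tensor product. It should however be emphasized that the morphism constructed is generally not an isomorphism, since the functor $F\tens \scbul$ does not commute with arbitrary projective limits. This is precisely what one expects to motivate the subsequent refinements of the paper under the stronger hypothesis that $F$ is $\R$-constructible, where Corollary~\ref{C30} provides the missing exactness and should allow~\eqref{morphism1} to become an isomorphism.
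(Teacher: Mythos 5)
Your construction is exactly the paper's: tensor the canonical projections $G^\h \to G\tens(\coro_X/\h^{n+1}\coro_X)$ with $F$, rebracket by associativity, and pass to the projective limit defining $(F\tens G)^\h$. The argument is correct and matches the paper's proof; the closing remarks on non-invertibility are accurate but not needed for the lemma.
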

\begin{proof} 
It is enough to note that there is a projective system of morphisms
$F\tens G^\h\to F\tens (G\tens(\coro/\h^{n+1}\coro))
\isoto(F\tens G)\tens(\coro/\h^{n+1}\coro)$. 
\end{proof}

\begin{lemma}\label{L35} 
Let $F,G\in \Derb(\C_X)$. 
Then there is a natural bifunctorial morphism in $\Derb(\coro_X)$: 
\eq\label{eq:regularity3}
F \tens G^\rhb \to (F \tens G)^\rhb.
\eneq
\end{lemma}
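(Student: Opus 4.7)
My plan is to realize the morphism by identifying $G^\rhb$ and $(F \tens G)^\rhb$ with derived internal hom objects via Proposition~\ref{chc0}, and then invoking a standard projection morphism.

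First, observe that $\C_X$ is an $\h$-acyclic $\C_X$-algebra (its higher cohomology vanishes on contractible opens, so Proposition~\ref{chc5} applies) and $\C_X^\h \simeq \coro_X$. Hence Proposition~\ref{chc0} applied with $\shro = \C_X$ and $\shr = \coro_X$ yields, for any $H \in \Derb(\C_X)$, a canonical isomorphism
\[
H^\rhb \simeq \rhom[\C_X](\cor_X/\h\coro_X,\,H)
\]
in $\Derb(\coro_X)$, where $\cor_X/\h\coro_X$ is viewed as a $(\C_X,\coro_X)$-bimodule and the $\coro_X$-action on the right hand side is induced by the right $\coro_X$-action on this bimodule. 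Applying this to $H = G$ and to $H = F \tens G$, the morphism~\eqref{eq:regularity3} amounts to the construction of a natural arrow
\[
F \tens \rhom[\C_X](\cor_X/\h\coro_X,\,G) \to \rhom[\C_X](\cor_X/\h\coro_X,\,F \tens G).
\]

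Second, I would invoke the canonical bifunctorial projection morphism
\[
F \tens \rhom[\C_X](A,\,G) \to \rhom[\C_X](A,\,F \tens G),
\]
valid for any $F, G, A \in \Derb(\C_X)$ and obtained by adjunction from the evaluation $A \tens \rhom[\C_X](A, G) \to G$; this is a standard morphism, a reference being~\cite[(2.6.26)]{KS1}. Specializing to $A = \cor_X/\h\coro_X$ produces the desired arrow at the level of $\Derb(\C_X)$.

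Finally, I would verify that the resulting arrow actually lifts to $\Derb(\coro_X)$. The $\coro_X$-action on both sides comes from the right $\coro_X$-action on the bimodule $\cor_X/\h\coro_X$, and the naturality of the projection morphism in its first argument ensures compatibility with this right $\coro_X$-action; this can be confirmed at the level of $K$-injective representatives. Bifunctoriality in $F$ and $G$ is then immediate from that of the projection morphism. I do not anticipate any real obstacle: the content is the existence of the projection morphism together with the essentially formal compatibility with the right $\coro_X$-structure on $\cor_X/\h\coro_X$.
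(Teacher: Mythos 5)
Your construction is correct, but it takes a genuinely different route from the paper's. The paper first defines the morphism at the underived level, $F\tens G^\h\to(F\tens G)^\h$ for sheaves $F,G$, via the projective system $F\tens G^\h\to F\tens(G\tens\coro/\h^{n+1}\coro)\simeq(F\tens G)\tens\coro/\h^{n+1}\coro$ (Lemma~\ref{L34}), and then passes to the derived category in two steps: deriving in $G$ (both functors being left exact in $G$, with exactness of $F\tens\scbul$ over the field $\C$ used implicitly) and then deriving in $F$ (both functors being exact in $F$). You instead stay entirely inside the derived category, writing $H^\rhb\simeq\rhom[\C_X](\cor_X/\h\coro_X,H)$ by Proposition~\ref{chc0} and invoking the canonical projection morphism $F\tens\rhom[\C_X](A,G)\to\rhom[\C_X](A,F\tens G)$, with the $\coro_X$-structure carried by the right action on the bimodule $\cor_X/\h\coro_X$. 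This is precisely the technique the paper itself uses to prove Lemma~\ref{Aux2}, so it is consistent with the rest of the text; it yields derived bifunctoriality in one stroke and sidesteps the (slightly glossed-over) compatibility of derived functors with composition in step (i) of the paper's proof. The trade-off is that the paper's explicit, section-level construction makes the commutativity of the diagram in Lemma~\ref{preparation} --- where this morphism is compared with the composition through $\rhom(F,G)^\rhb$ --- immediately visible, whereas with your construction that commutativity requires an extra (routine, but not free) identification of your projection morphism with the limit of the truncation maps.
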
  
\begin{proof}
(i) First, fix $F\in\md[\C_X]$ and consider the two functors
from $\md[\C_X]$ to $\md[\coro_X]$: $\theta_1\cl G\mapsto F\tens G^\h$ and $\theta_2\cl G\mapsto (F\tens G)^\h$.
By Lemma~\ref{L34}, there is a morphism of functors $u\cl\theta_1\to \theta_2$.
Since both functors are left exact, 
this morphism $u$ extends to the
derived functors and we get the morphism~\eqref{eq:regularity3} 
for a fixed $F\in\mdrc[\C_X]$.

\noindent
(ii) Now let us fix $G\in \Derb(\C_X)$ and 
consider the two functors from $\md[\C_X]$ to $\Derb(\coro_X)$: $\lambda_1\cl F\mapsto F\tens G^\rhb$ 
and $\lambda_2\cl F\mapsto (F\tens G)^\rhb$.
By (i) there exists a morphism of functors $v\cl\lambda_1\to\lambda_2$.
Both functors extend naturally to the category of bounded complexes 
$\rmC^\Rb(\md[\C_X])$ and send complexes quasi-isomorphic to zero to
objects isomorphic to zero in $\Derb(\coro_X)$. Hence, both functors
extend to $\Derb(\C_X)$ as well as the morphism of functors $v$.
\end{proof}

\begin{lemma}\label{preparation} 
For $F,G\in\Derb(\C_X)$, we have a commutative diagram in $\Derb(\coro_X)$:
\eq\label{diagram1}
&&\xymatrix{
\RDD F \tens G^\rhb \ar[r]\ar[dr]
                        &(\RDD  F\tens G)^\rhb\ar[d]\\
&\rhom(F,G)^\rhb,
}\eneq
such that the morphism in the horizontal arrow is the one given by Lemma~\ref{L35}.
\end{lemma}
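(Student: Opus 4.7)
First, I would make explicit the two unspecified arrows of the diagram. The vertical morphism $(\RDD F \tens G)^\rhb \to \rhom(F,G)^\rhb$ is obtained by applying the derived functor $(\cdot)^\rhb$ to the canonical evaluation morphism $\beta_{F,G}\cl \RDD F \tens G \to \rhom(F,G)$ in $\Derb(\C_X)$. The diagonal morphism is the composition
\eqn
\RDD F \tens G^\rhb \to \rhom(F,G^\rhb)\simeq \rhom(F,G)^\rhb,
\eneqn
where the first arrow is the canonical evaluation pairing $\beta_{F,G^\rhb}$ and the second is the isomorphism of Lemma~\ref{L15}. The horizontal arrow is Lemma~\ref{L35} applied with $\RDD F$ in place of the first variable.

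To verify commutativity, the plan is to reduce all three arrows to a single, canonical description. By Proposition~\ref{chc0} one has an isomorphism of functors $(\cdot)^\rhb\simeq \rhom[\C_X](K,\cdot)$ with $K\eqdot \coro_X^\loc/\h\coro_X$. Under this identification, the morphism of Lemma~\ref{L35} becomes the canonical tensor-hom morphism $\RDD F \tens \rhom[\C_X](K,G) \to \rhom[\C_X](K,\RDD F \tens G)$, the vertical arrow becomes $\rhom[\C_X](K,\beta_{F,G})$, and the isomorphism of Lemma~\ref{L15} becomes the standard swap-of-arguments isomorphism $\rhom[\C_X](F,\rhom[\C_X](K,G))\simeq \rhom[\C_X](K,\rhom[\C_X](F,G))$. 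Once this is done, the two compositions in the diagram correspond, under the derived tensor-hom adjunction, to one and the same morphism $\RDD F\tens F\tens \rhom[\C_X](K,G)\to \rhom[\C_X](K,G)$ induced by the evaluation pairing $\RDD F\tens F \to \C_X$ tensored with the identity, so the square commutes.

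The only real difficulty is that all of this lives in the derived category, so the coherence underlying the above adjunction argument must be applied at the derived level rather than on plain modules. To make the check rigorous I would replace $F$ by a K-flat resolution and $G$ by a K-injective resolution of sheaves of $\C_X$-modules; the three arrows then admit honest representatives at the level of complexes, where the commutativity is an immediate unwinding of the definitions of $\beta$, of the tensor-hom morphism, and of the evaluation $K\tens \rhom[\C_X](K,-)\to (-)$.
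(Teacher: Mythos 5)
Your construction of the two unspecified arrows is exactly the paper's: the diagonal arrow is the evaluation $\RDD F\tens G^\rhb\to\rhom(F,G^\rhb)$ followed by the isomorphism of Lemma~\ref{L15}, and the vertical arrow is $(\scbul)^\rhb$ applied to $\RDD F\tens G\to\rhom(F,G)$, so the approach is essentially the same; the paper then simply declares the commutativity obvious, whereas you supply a genuine verification via the identification $(\scbul)^\rhb\simeq\rhom(K,\scbul)$ of Proposition~\ref{chc0} and tensor--hom adjunction. The only point you assert without checking is that the morphism of Lemma~\ref{L35} (built there by projective limits and extension of functors) agrees under that identification with the canonical tensor--hom morphism, but this is a routine compatibility of canonical maps and does not affect the correctness of the argument.
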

\begin{proof}
The oblique arrow is the composition of two canonical morphisms:
\eqn
\rhom(F,\C)\tens G^\rhb\to\rhom(F,G^\rhb)
\isoto\rhom(F,G)^\rhb. 
\eneqn
The vertical arrow is defined by applying the functor $(\scbul)^\rhb$ 
to the canonical morphism $\rhom(F,\C)\tens G\to\rhom(F,G)$. 
The commutativity of the 
diagram is obvious.
\end{proof}

\begin{remark}
Assuming that $G\in\md[\shd_X^\h]$ in Lemma~\ref{L34}, then the morphism~\eqref{morphism1} is $\shd_X^\h$-linear. 
Similarly, if $G\in\Derb(\shd_X^\h)$, then the morphism~\eqref{eq:regularity3} is a morphism in $\Derb(\shd_X^\h)$ and 
the diagram in Lemma~\ref{preparation} is a commutative diagram in $\Derb(\shd_X^\h)$. 
In the sequel we shall use such diagram in the case $G= \shk_\h$.
\end{remark}

\begin{theorem}\label{T54} 
Let $F\in\Derb_\Rc(\C_X)$ and suppose that $(\shm,F^\h)$ is an $f$-elliptic pair over $\coro$. 
Then there is a commutative 
diagram of isomorphisms in $\Derb(\coro_X)$:
\eq\label{regularity7}
\xymatrix{
\rhom[\Dh](\shm,\RDD_{\coro_X} F^\h\lltens[\coro_X] \shk_\h)\ar[r]^-{\sim}\ar[dr]^-\sim
                        &\rhom[\Dh](\shm,(\RDD F\tens \shk)^\rhb)\ar[d]^-\sim\\
&\rhom[\Dh](\shm, \rhom(F,\shk)^\rhb).
}\eneq
\end{theorem}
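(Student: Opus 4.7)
The plan is to realize the triangle of the theorem as the image under $\rhom[\Dh](\shm,-)$ of the commutative triangle of Lemma~\ref{preparation} (with $G=\shk$), after a preliminary identification of its upper-left vertex. The ellipticity of $(\shm,F^\h)$ will make the oblique arrow an isomorphism through Remark~\ref{RT53}, while the $\R$-constructibility of $F$ will make the vertical arrow an isomorphism via cohomological constructibility; the horizontal arrow then follows by commutativity.

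\textbf{Setting up the diagram.} Using Proposition~\ref{P29}, the remark after Lemma~\ref{T31}, and the identification $\shk_\h\simeq\shk^\rhb$ noted before Lemma~\ref{Aux0}, Lemma~\ref{T31}(ii) applied to the pair $(\RDD F,\shk)$ yields
\[
\RDD_{\coro_X}F^\h\lltens[\coro_X]\shk_\h\simeq (\RDD F)^\rhb\lltens[\coro_X]\shk^\rhb\simeq \RDD F\tens \shk_\h.
\]
By the remark following Lemma~\ref{preparation} (taking $G=\shk$, whose formal extension carries a natural $\shd_X^\h$-module structure), the triangle of Lemma~\ref{preparation} lives in $\Derb(\shd_X^\h)$; applying $\rhom[\Dh](\shm,-)$ and substituting the above identification recovers exactly the triangle of Theorem~\ref{T54}.

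\textbf{The arrows are isomorphisms.} For the oblique arrow, Remark~\ref{RT53} applied to the elliptic pair $(\shm,F^\h)$ gives
\[
\rhom[\Dh](\shm,\RDD_{\coro_X}F^\h\lltens[\coro_X]\shk_\h)\isoto \rhom[\Dh](\shm,\rhom[\coro_X](F^\h,\shk_\h)),
\]
and Lemma~\ref{T31}(i) identifies $\rhom[\coro_X](F^\rhb,\shk^\rhb)\simeq \rhom(F,\shk)^\rhb$, matching the target vertex. For the vertical arrow, since $F\in\Derb_\Rc(\C_X)$ is cohomologically constructible in the sense of~\cite[Proposition~3.4.3]{KS1}, the canonical morphism $\RDD F\tens\shk\to \rhom(F,\shk)$ is already an isomorphism; applying $(\scbul)^\rhb$ followed by $\rhom[\Dh](\shm,-)$ preserves this. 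The horizontal arrow is then an isomorphism by commutativity of the triangle. The only delicate point in the whole argument is the bookkeeping that verifies the formal-extension identifications above are compatible with the three morphisms of Lemma~\ref{preparation}; the genuine analytic input---ellipticity plus the micro-support estimate---has already been absorbed into Theorem~\ref{T53}.
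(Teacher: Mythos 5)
Your construction of the diagram and your treatment of the oblique arrow coincide with the paper's: both realize the triangle as the image under $\rhom[\Dh](\shm,\scbul)$ of the triangle of Lemma~\ref{preparation} with $G=\shk$, identify the upper-left vertex via Proposition~\ref{P29} and Lemma~\ref{T31}, and obtain the oblique isomorphism from Theorem~\ref{T53} (Remark~\ref{RT53}) composed with Lemma~\ref{T31}(i). The gap is in the vertical arrow. The canonical morphism $\RDD F\tens \shk\to\rhom(F,\shk)$ is \emph{not} an isomorphism for a general $F\in\Derb_\Rc(\C_X)$; cohomological constructibility of $F$ does not give this. For example, take $F=\C_U$ with $U$ open, subanalytic and relatively compact, and let $j:U\to X$ be the inclusion: then $\rhom(\C_U,G)\simeq\roim j\opb j G$ while $\RDD \C_U\tens G\simeq(\roim j\C_U)\tens G$, and these already differ at a boundary point of $U$ when $G$ is a skyscraper there. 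More to the point for this theorem: when $f=a_X$ and $F=\C_M$ ($M$ a real form of $X$), the vertical arrow specializes, up to shift, to the inclusion of real analytic functions into hyperfunctions (the map $\sha_M^\h\to\shb_M^\h$ of the last section), which is certainly not an isomorphism of sheaves. The micro-support of $\shk$ is large, so no transversality with $\SSi(F)^a$ is available before applying $\rhom[\Dh](\shm,\scbul)$, and the ellipticity hypothesis — which you declare to have been fully absorbed into Theorem~\ref{T53} — is in fact needed a second time here.

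The paper closes this gap as follows: after applying $\rhom[\Dh](\shm,\scbul)$, both vertices of the vertical arrow are cohomologically complete (Propositions~\ref{P7} and~\ref{chc3}), so by conservativity of $\gr$ (Proposition~\ref{chc1}) it suffices to show that
\begin{equation*}
\rhom[\D](\gr(\shm),\RDD F\tens\shk)\to\rhom[\D](\gr(\shm),\rhom(F,\shk))
\end{equation*}
is an isomorphism, and this is exactly the regularity isomorphism of~\cite[Theorem~2.15]{ScS} applied to the $f$-elliptic pair $(\gr(\shm),F)$ over $\C$. Replacing your cohomological-constructibility argument by this reduction repairs the proof; the rest of your argument (commutativity forcing the horizontal arrow) then goes through as in the paper.
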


\begin{proof}
First note that we obtain the diagram~\eqref{regularity7} by applying the functor 
$\rhom[\Dh](\shm,\scbul)$ to the commutative diagram provided by Lemma~\ref{preparation}:
\eqn
&&\xymatrix{
\RDD F\tens \shk_\h \ar[r]\ar[dr]
                        &(\RDD F\tens \shk)^\rhb\ar[d]\\
&\rhom(F,\shk)^\rhb.
}\eneqn
We also use the fact that $\RDD_{\coro_X}F^\h\lltens[\coro_X] \shk_\h$
is isomorphic to 
$\RDD F\tens \shk_\h$.

Note that the oblique arrow is an isomorphism since it is the composition of two canonical isomorphisms: 
\eqn
\rhom[\Dh](\shm,\RDD_{\coro_X}F^\h\lltens[\coro_X] \shk_\h)
&\isoto& \rhom[\Dh](\shm,\rhom[\coro_X](F^\h,\shk_\h)) \\
&\isoto&\rhom[\Dh](\shm,\rhom(F,\shk_\h)^\rhb),
\eneqn
The first one results from applying the 
regularity theorem~\ref{T53} to the $f$-elliptic pair  
$(\shm,F^\h)$ (see also Remark~\ref{RT53}) and the second isomorphism results from Lemma~\ref{T31}. 

The vertical arrow is also an isomorphism in $\Derb(\coro_X)$. 
In fact, we conclude from~\cite[Prop. 1.5.10]{KS3} and~\cite[Prop. 2.2]{DGS}
that the objects on both sides of the vertical arrow are cohomologically 
complete. Hence, since $\gr$ is conservative on cohomologically complete objects, 
it is enough to prove that the canonical morphism below is an isomorphism:
\eq\label{regularity6}
\rhom[\D](\gr(\shm),\RDD F \tens \shk) \to
\rhom[\D](\gr(\shm),\rhom(F,\shk)).
\eneq
In fact, $(\gr(\shm),F)$ is an $f$-elliptic pair over $\C$ 
and the morphism~\eqref{regularity} is precisely the regularity isomorphism 
for elliptic pairs over $\C$ applied to $(\gr(\shm),F)$ (see Theorem 2.15 of~\cite{ScS}).

We have proved that the oblique and vertical arrows in diagram~\eqref{regularity7} are isomorphisms. 
The commutativity of the diagram allow us to conclude that the horizontal arrow is also an isomorphism. 
\end{proof}

Using the functorial properties of $(\scbul)^\rhb$ we also get: 

\begin{corollary}
Let $(\shm,F)$ be an $f$-elliptic pair over $\C$. 
Then, $(\shm^\h,F^\h)$ is an $f$-elliptic pair over $\coro$ and 
there are canonical isomorphisms in $\Derb(\coro_X)$:
\eqn
\rhom[\shd_X^\h](\shm^\h,\RDD_{\coro_X} F^\h\lltens[\coro_X] \shk_\h) &\simeq&
\rhom[\shd_X](\shm,\RDD  F \tens\shk)^\rhb \\
&\simeq&\rhom[\shd_X](\shm,\rhom[\C_X](F,\shk))^\rhb.
\eneqn
\end{corollary}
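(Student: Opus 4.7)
The plan is to apply Theorem~\ref{T54} to the pair $(\shm^\h, F^\h)$ and then transfer the resulting isomorphisms through the formal extension functor via Lemma~\ref{T42 m}.

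First, I would verify that $(\shm^\h, F^\h)$ is an $f$-elliptic pair over $\coro$. By Proposition~\ref{P1}, this reduces to showing that $(\gr(\shm^\h), \gr(F^\h))$ is an $f$-elliptic pair over $\C$. Proposition~\ref{P41}(b) gives $\gr(\shm^\h) \simeq \shm$, while Proposition~\ref{P29} combined with the base change $\coro_X \lltens[\coro_X] \C_X \simeq \C_X$ yields $\gr(F^\h) \simeq (F \tens \coro_X) \lltens[\coro_X] \C_X \simeq F$. Since $(\shm, F)$ is $f$-elliptic over $\C$ by hypothesis, the same holds for $(\shm^\h, F^\h)$ over $\coro$.

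Next, I would apply Theorem~\ref{T54} to the $f$-elliptic pair $(\shm^\h, F^\h)$, with $F \in \Derb_\Rc(\C_X)$, obtaining the chain of isomorphisms
\begin{equation*}
\rhom[\shd_X^\h](\shm^\h, \RDD_{\coro_X} F^\h \lltens[\coro_X] \shk_\h) \simeq \rhom[\shd_X^\h](\shm^\h, (\RDD F \tens \shk)^\rhb) \simeq \rhom[\shd_X^\h](\shm^\h, \rhom(F,\shk)^\rhb).
\end{equation*}
Here I use that $\RDD F \tens \shk$ and $\rhom(F, \shk)$ both carry the structure of objects in $\Derb(\shd_X)$ inherited from the left $\shd_X$-action on the transfer module $\shk$.

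Finally, I would invoke Lemma~\ref{T42 m} applied to $\shm \in \Derb_\coh(\shd_X)$ and $\shn \in \Derb(\shd_X)$, namely
\begin{equation*}
\rhom[\shd_X^\h](\shm^\h, \shn^\rhb) \simeq \rhom[\shd_X](\shm, \shn)^\rhb,
\end{equation*}
with $\shn = \RDD F \tens \shk$ and $\shn = \rhom(F, \shk)$ respectively, which extracts the formal extension to the outside and produces the two claimed isomorphisms in $\Derb(\coro_X)$.

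There is no real obstacle here: the corollary is essentially a bookkeeping consequence of Theorem~\ref{T54}, and the only thing worth checking carefully is that each intermediate complex has the $\shd_X$-module structure demanded by Lemma~\ref{T42 m}, which is guaranteed because $\shk$ is a $(\shd_X, f^{-1}\shd_Y)$-bimodule and both $\tens$ and $\rhom$ against the $\C$-sheaves $F, \RDD F$ preserve this left $\shd_X$-structure. The compatibility between $\RDD_{\coro_X} F^\h \lltens[\coro_X] \shk_\h$ and $\RDD F \tens \shk_\h$, used implicitly in the first step of the chain, has already been extracted inside the proof of Theorem~\ref{T54}.
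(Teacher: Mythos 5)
Your proposal is correct and follows exactly the route the paper intends: the paper gives no written proof but states that the corollary follows from Theorem~\ref{T54} together with Lemmas~\ref{L15} and~\ref{T42 m}, which is precisely your argument (verify ellipticity of $(\shm^\h,F^\h)$ via Propositions~\ref{P1}, \ref{P41} and \ref{P29}, apply Theorem~\ref{T54}, then pull the formal extension outside with Lemma~\ref{T42 m}, using that coherent $\shd_X$-modules are $\h$-acyclic so $\shm^\h\simeq\shm^\rhb$). No gaps.
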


\paragraph{Finiteness theorem.}


\begin{theorem}\label{T55} 
Let $(\shm,F)$ be a good $f$-elliptic pair over $\coro$ 
and suppose that $f$ is proper when restricted to the support of $(\shm,F)$.
Then, $\deimh f(\shm\lltens[\coro_X]F)$ is an object of $\Derb_\gd(\shd_Y^\h)$.
\end{theorem}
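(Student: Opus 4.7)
The plan is to apply Theorem~\ref{T12} to the object $\shn \eqdot \deimh f(\shm \lltens[\coro_X] F)$, reducing coherence of $\shn$ over $\shd_Y^\h$ to two facts: that $\shn$ is cohomologically complete and that $\gr(\shn)$ has coherent cohomology. In fact I will show $\gr(\shn) \in \Derb_\gd(\shd_Y)$, and the definition of goodness then upgrades coherence of $\shn$ to the desired $\shn \in \Derb_\gd(\shd_Y^\h)$.

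First I would compute $\gr(\shn)$. Using that $\gr$ commutes with tensor products and with proper direct images of sheaves, together with the isomorphism $\gr(\shk_\h) \simeq \shk$ recalled in the excerpt, one obtains
\eqn
\gr(\shn) \simeq \deim f\bigl(\gr(\shm) \lltens[\C_X] \gr(F)\bigr).
\eneqn
By Proposition~\ref{P1}, $(\gr(\shm), \gr(F))$ is a classical $f$-elliptic pair over $\C$; since $\supp(\shm) = \supp(\gr(\shm))$ and similarly for $F$, the morphism $f$ remains proper on the support of this pair; and $\gr(\shm) \in \Derb_\gd(\shd_X)$ by the goodness hypothesis on $\shm$. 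Applying the classical finiteness theorem of Schapira--Schneiders~\cite{ScS} to this good $f$-elliptic pair yields $\gr(\shn) \in \Derb_\gd(\shd_Y)$, which in particular provides the coherence assumption of Theorem~\ref{T12}.

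It remains to establish cohomological completeness of $\shn$. The key move is to reshape the integrand via the regularity theorem (Theorem~\ref{T53}):
\eqn
(\shm \lltens[\coro_X] F) \lltens[\shd_X^\h] \shk_\h
\simeq F \lltens[\coro_X] (\shm \lltens[\shd_X^\h] \shk_\h)
\simeq \rhom[\coro_X]\bigl(\RDD_{\coro_X} F,\, \shm \lltens[\shd_X^\h] \shk_\h\bigr).
\eneqn
By Lemma~\ref{L75}, $\shm \lltens[\shd_X^\h] \shk_\h$ is cohomologically complete; since this property is independent of the base ring, Proposition~\ref{P7} ensures the above $\rhom$ is cohomologically complete as well. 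Its support is contained in $\supp(\shm) \cap \supp(F)$, on which $f$ is proper, so $\reim f$ agrees with $\roim f$ on this object, and Proposition~\ref{chc2} yields cohomological completeness of $\shn$. The main obstacle is precisely this step: without the regularity isomorphism there seems to be no obvious way to transfer cohomological completeness of $\shm \lltens[\shd_X^\h] \shk_\h$ across the tensor product with $F$ over $\coro_X$. With both hypotheses of Theorem~\ref{T12} verified, one concludes $\shn \in \Derb_\coh(\shd_Y^\h)$, and the goodness of $\gr(\shn)$ established above then gives $\shn \in \Derb_\gd(\shd_Y^\h)$, completing the proof.
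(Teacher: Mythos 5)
Your proposal is correct and follows essentially the same route as the paper's proof: the regularity isomorphism of Theorem~\ref{T53} combined with Lemma~\ref{L75} and Propositions~\ref{P7} and~\ref{chc2} to get cohomological completeness, the classical finiteness theorem of Schapira--Schneiders applied to $(\gr(\shm),\gr(F))$ for the graded part, and Theorem~\ref{T12} (with coherence replaced by goodness) to conclude. No gaps.
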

\begin{proof}
Note that by Theorem 1.6.4 of~\cite{KS3} it suffices to prove that: 
\bnum
\item $\deimh f(F\lltens[\coro_X] \shm)$ is cohomologically complete;
\item $\gr(\deimh f(F\lltens[\coro_X] \shm))$ is an object of $\Derb_\gd(\shd_Y^\h)$.
\enum 
Set $\shl\eqdot (F\lltens[\coro_X] \shm) \lltens[\shd_X^\hbar] \shk_\h$.
The regularity theorem~\ref{T53} yields the isomorphism:
\eqn
\shl \simeq \rhom[\Dh](\RDD_{\coro_X} F,\shm\lltens[\Dh]\shk_\h).
\eneqn
Since $\shm\lltens[\Dh]\shk_\h$ is cohomologically complete, $\shl$ is also cohomologically 
complete in view of Propositions 1.5.10 of~\cite{KS3}. 
Finally, $\deimh f(F\lltens[\coro_X] \shm)=\reim f(\shl)$ is cohomologically complete by Proposition~1.5.12 of~\cite{KS3} and 
since $\roim f$ and $\reim f$ coincide by the hypothesis on the support. 

On the other hand, note that $(\gr(\shm),\gr(F))$ is an $f$-elliptic pair over $\C$ 
that satisfies the conditions of the finiteness theorem for elliptic pairs over $\C$. 
Hence, $\gr(\deimh f(F\lltens[\coro_X] \shm))\simeq \deim f(\gr(F)\tens \gr(\shm))$ 
is an object of $\Derb_\gd(\shd_Y)$ by Theorem 4.2. of~\cite{ScS}.
\end{proof}

\paragraph{Duality theorem.}

The results in this paragraph are obtained assuming that $f:X\to Y$ is a smooth morphism of complex manifolds. 

\begin{proposition}\label{prop duality2}
If $f:X\to Y$ is smooth, then $\opb f \shd^\h_Y \simeq (\opb f \shd_Y)^\h$.
\end{proposition}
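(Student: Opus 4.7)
The plan is to derive this proposition directly from Proposition~\ref{VB} by taking the sheaf of $\C$-algebras $\shro$ to be $\shd_Y$. Since $f$ is already assumed smooth, the only thing left to check is that $\shd_Y$ meets the two hypotheses imposed on $\shro$ in that proposition: namely, that it is $(\scbul)^\hbar$-acyclic and that it satisfies Assumption~\ref{Assumption R0}.

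First I would verify Assumption~\ref{Assumption R0}. For the basis $\mathcal{B}$ I take the basis of $Y$ consisting of Stein open subsets (or, in the locally compact variant, of compact Stein subsets). The order filtration exhibits $\shd_Y$ as the filtrant union $\bigcup_{k\geq 0} \shd_Y^{\leq k}$ of coherent $\sho_Y$-modules, since each graded piece $\shd_Y^{\leq k}/\shd_Y^{\leq k-1}$ is locally free of finite rank over $\sho_Y$. By Cartan's Theorem~B one has $H^j(S;\shd_Y^{\leq k}) = 0$ for all $j > 0$, all $k \geq 0$, and all $S \in \mathcal{B}$. Since filtrant inductive limits commute with cohomology on the sets of $\mathcal{B}$, the vanishing $H^j(S;\shd_Y) = 0$ for $j > 0$ follows at once. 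With this basis $\mathcal{B}$, Proposition~\ref{chc5} then yields the $(\scbul)^\hbar$-acyclicity of $\shd_Y$ for free, so both hypotheses on $\shro$ are in place.

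The conclusion is then a direct invocation of Proposition~\ref{VB} with $\shro = \shd_Y$, giving the isomorphism $\opb f \shd_Y^\h \isoto (\opb f \shd_Y)^\h$. The only genuinely non-formal step is the cohomology vanishing for $\shd_Y$ itself on a basis of Stein sets, which cannot be extracted by a direct appeal to Theorem~B because $\shd_Y$ is not $\sho_Y$-coherent; the trick is the reduction to the coherent pieces $\shd_Y^{\leq k}$ via the order filtration, after which everything else is packaging of results already developed in Section~1.
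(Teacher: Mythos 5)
Your proof is correct and is essentially the paper's: the published argument is a one-line invocation of Proposition~\ref{VB} with $\shro=\shd_Y$ and $\mathcal{B}$ the family of compact Stein subsets of $Y$, leaving implicit exactly the verification you spell out (reduction via the order filtration $\shd_Y=\bigcup_k\shd_Y^{\leq k}$ to coherent $\sho_Y$-modules, Cartan's Theorem~B, passage to the limit, and then Proposition~\ref{chc5} for the $(\scbul)^\hbar$-acyclicity). One caution: prefer the compact Stein variant, since over a non-relatively-compact open Stein set $U$ the functor $\Gamma(U;\cdot)$ does not commute with the filtrant colimit $\varinjlim_k\shd_Y^{\leq k}$ (sections of $\shd_Y$ over $U$ have only locally bounded order), whereas over compact subsets cohomology does commute with filtrant inductive limits.
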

\begin{proof}
This is a particular case of Proposition~\ref{VB} choosing for $\mathcal{B}$ the family of compact Stein subsets of $Y$.
\end{proof}

The rings $\opb f \shd^\h_Y$ and $(\opb f \shd_Y)^\h$ are not isomorphic in general: 
if $i:Y\into X$ is the embedding of a closed submanifold of dimension $d_Y<d_X$, 
then $\opb i \shd^\h_X \to (\opb i \shd_X)^\h$ is a monomorphism which is not an epimorphism.

Note that in the smooth case $\shk_\h$ is a coherent left $\shd_X^\h$-module.
Hence, for any coherent right $\shd_X^\h$-module $\shm$, one has:
\eq\label{aux4}
\shm\tens[\shd_X^\h]\shk^\h \simeq \shm\tens[\shd_X^\h] (\shd_X^\h\tens[\shd_X]\shk)\simeq \shm\tens[\shd_X]\shk.
\eneq
By the isomorphisms in~\eqref{aux4}, the object $\shm\tens[\shd_X]\shk$ has a structure of $\opb f({\shd_Y^\h}^\rop)$-module. 
Passing to the derived category and applying $\reim f$, one concludes that $\deim f(\shm)$ is an object of 
$\Derb({\shd_Y^\h}^\rop)$ and that it is isomorphic to $\deimh f(\shm)$ for any $\shm\in\Derb_\coh({\shd_X^\h}^\rop)$. 
In other words, the direct image of $\shm$ in the $\h$-setting coincides with 
its direct image as a $\shd$-module. We use this fact in the sequel.



\begin{lemma}\label{Aux3}
Let $\shm\in\Derb_\coh(\shd_X^\rop)$. 
There is a morphism $\deimh f(\shm^\h)\to \deim f(\shm)^\rhb$ in $\Derb({\shd_Y^\h}^\rop)$.
\end{lemma}
\begin{proof}
First note that we have a chain of isomorphisms in $\Derb(\opb f{\shd_Y^\h}^\rop)$:
\eqn
\shm^\h\lltens[\shd_X]\shk&\simeq& \shm^\h \lltens[\shd_X] \RDD_{X}\RDD_{X}\shk  \\
&\simeq& \rhom[\shd_X](\RDD_{X} \shk,\shm_\h) \\
&\simeq& \rhom[\shd_X](\RDD_X \shk, \shm)^\rhb. 
\eneqn
The first and second isomorphisms follow from the coherence of $\shk$ over $\shd_X$. 
The third follows from the properties of $(\scbul)^\rhb$ (Lemma 2.3 of~\cite{DGS}). 

Applying $\reim f$ we get the following isomorphism in $\Derb(\shd_Y^\rop)$: 
\eqn 
\deimh f(\shm^\h)\simeq \reim f((\shm\lltens[\shd_X]\shk)^\rhb).
\eneqn 
Finally, Lemma~\ref{Aux2} entails the following morphism in $\Derb({\shd_Y^\h}^\rop)$: 
\eqn 
\deimh f(\shm^\h)\to \reim f((\shm\lltens[\shd_X]\shk))^\rhb = \deim f(\shm)^\rhb.
\eneqn
\end{proof}

If $\shm$ is a right $(\shd_X,\shd_X)$-bimodule, its direct image as a bimodule is the  
object in the derived category $\Derb(\shd_Y^\rop\tens\shd_Y^\rop)$ defined by:
\eqn
\bideim f(\shm):=\reim f((\shm\lltens[\shd_X] \shk) \lltens[\shd_X] \shk).
\eneqn
In particular $\Omega_X\tens[\sho_X]\shd_X$ is a right $(\shd_X,\shd_X)$-bimodule and 
one has the so-called differential trace morphism in $\Derb(\shd_Y^\rop\tens\shd_Y^\rop)$:
\eq\label{trace1}
\tr_f:\bideim f(\Omega_X[d_X]\tens[\sho_X]\shd_X)\to \Omega_Y[d_Y]\tens[\sho_Y]\shd_Y.
\eneq

Note that if $\shm$ is a right $(\shd_X^\hbar,\shd_X^\hbar)$-bimodule then 
$\bideimh f(\shm)\eqdot \bideim f(\shm)$ is an object of $\Derb({\shd_Y^\hbar}^{\rop} \tens[\coro] {\shd_Y^\hbar}^{\rop})$.
 
\begin{proposition}\label{trace}
The morphism $f$ induces a differential trace morphism in the derived category 
$\Derb({\shd_Y^\hbar}^\rop \tens[\C^\hbar] {\shd_Y^\hbar}^\rop)$:
\begin{eqnarray*}
\tr_{f,\h}:\bideimh f(\Omega_X[d_X]\tens[\sho_X] \shd_X^\hbar) 
\to \Omega_Y[d_Y]\tens[\sho_Y] \shd_Y^\hbar.
\end{eqnarray*}
\end{proposition}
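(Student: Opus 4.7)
\smallskip

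\noindent\textbf{Proof proposal.}
The plan is to define $\tr_{f,\h}$ as, essentially, the formal extension $(\tr_f)^\rhb$ of the classical differential trace~\eqref{trace1}, after matching both the source $\bideimh f(\Omega_X[d_X]\tens_{\sho_X}\shd_X^\h)$ and the target $\Omega_Y[d_Y]\tens_{\sho_Y}\shd_Y^\h$ with the formal extensions of the corresponding classical objects. The smoothness of $f$ enters decisively through Proposition~\ref{prop duality2}, which gives $\opb f\shd_Y^\h\simeq(\opb f\shd_Y)^\h$ and thereby unlocks Lemma~\ref{Aux2}; through Lemma~\ref{Aux0}(b), which gives coherence of $\shk_\h$ as a left $\shd_X^\h$-module; and through Lemma~\ref{Aux1}, which supplies the crucial comparison between $\shk_\h\tens_{\sho_X}\shk$ and $(\shk\tens_{\sho_X}\shk)^\rhb$ together with its bimodule compatibility.

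\smallskip

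First, the target is identified as a formal extension: since $\Omega_Y\tens_{\sho_Y}\shd_Y$ is coherent when viewed as a right $\shd_Y$-module, Proposition~\ref{P41}(a) yields a canonical isomorphism of bimodules
\[
(\Omega_Y[d_Y]\tens_{\sho_Y}\shd_Y)^\rhb\isoto \Omega_Y[d_Y]\tens_{\sho_Y}\shd_Y^\h.
\]
Next, I would construct a canonical morphism
\[
\bideimh f\bigl(\Omega_X[d_X]\tens_{\sho_X}\shd_X^\h\bigr)
\longrightarrow\bigl(\bideim f(\Omega_X[d_X]\tens_{\sho_X}\shd_X)\bigr)^\rhb.
\]
The construction proceeds in two stages. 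After simplifying the outer factor via $(\Omega_X[d_X]\tens_{\sho_X}\shd_X^\h)\lltens_{\shd_X^\h}\shk_\h\simeq \Omega_X[d_X]\tens_{\sho_X}\shk_\h$, I would build a bimodule morphism at the integrand level
\[
\Omega_X[d_X]\tens_{\sho_X}\shk_\h\lltens_{\shd_X^\h}\shk_\h
\longrightarrow\bigl(\Omega_X[d_X]\tens_{\sho_X}\shk\lltens_{\shd_X}\shk\bigr)^\rhb,
\]
using Lemma~\ref{Aux1} applied to one of the two $\shk_\h$ factors, and the universal property of projective limits for the remaining factor (noting that both sides are cohomologically complete by Lemma~\ref{L75} and Proposition~\ref{chc3}, and have isomorphic graded objects $\Omega_X[d_X]\tens_{\sho_X}\shk\lltens_{\shd_X}\shk$, so Proposition~\ref{chc1} can be used to certify the morphism is well-defined up to unique isomorphism after construction on one canonical representative). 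Pushing forward by $\reim f$ and invoking Lemma~\ref{Aux2}, whose hypothesis is precisely Proposition~\ref{prop duality2}, produces the required comparison between $\reim f$ and $(\scbul)^\rhb$.

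\smallskip

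Finally, I would apply the formal extension of the classical trace $(\tr_f)^\rhb$ to land in $(\Omega_Y[d_Y]\tens_{\sho_Y}\shd_Y)^\rhb\simeq \Omega_Y[d_Y]\tens_{\sho_Y}\shd_Y^\h$, and take $\tr_{f,\h}$ to be the composition of all three morphisms. That the composite lives in $\Derb({\shd_Y^\h}^\rop\tens_{\coro}{\shd_Y^\h}^\rop)$ is ensured at each stage by the bimodule compatibility clauses in Lemmas~\ref{Aux1} and~\ref{Aux2} together with the standard right $(\shd_Y,\shd_Y)$-bimodule structure carried by $\tr_f$. The main obstacle I anticipate is the bookkeeping of bimodule structures in the integrand-level morphism, specifically verifying that the tensor product over $\shd_X^\h$ is compatible with the structures provided by Lemma~\ref{Aux1} (which only ensures left $\shd_X^\h$ and right $f^{-1}\shd_Y^\h$ compatibility for one $\shk_\h\tens_{\sho_X}\shk$ factor); if a direct bimodule verification proves awkward, the fallback is to construct the morphism on the underlying $\coro_X$-module level and then promote to a bimodule morphism using the conservativity of $\gr$ on cohomologically complete objects (Proposition~\ref{chc1}), checking that the induced morphism on graded objects agrees with the classical one that appears in~\eqref{trace1}.
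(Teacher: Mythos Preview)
Your overall architecture---identify the target with a formal extension, build an integrand-level comparison to $(\cdots)^\rhb$, push forward via Lemma~\ref{Aux2}, then apply $(\tr_f)^\rhb$---is exactly the paper's. The gap is in the middle step, where you write ``Lemma~\ref{Aux1} applied to one of the two $\shk_\h$ factors, and the universal property of projective limits for the remaining factor.'' This is not yet a construction, and your invocation of Proposition~\ref{chc1} (both here and in the fallback) is a misuse: conservativity of $\gr$ tells you only that a \emph{given} morphism between cohomologically complete objects is an isomorphism when its graded is; it neither manufactures morphisms nor promotes a $\coro_X$-linear map to a bimodule map.

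The paper fills this step with two concrete maneuvers you have not identified. First, the bimodule identification $\shk_\h\simeq\shd_X^\h\tens_{\shd_X}\shk$ (the second lemma labeled~\ref{Aux0}; you only cite the coherence statement) converts one tensor $\lltens_{\shd_X^\h}\shk_\h$ into $\lltens_{\shd_X}\shk$, so after associativity one reaches $\Omega_X[d_X]\lltens_{\shd_X}(\shk_\h\tens_{\sho_X}\shk)$. Lemma~\ref{Aux1} (followed by $\tau^{\leq 0}(\,\cdot\,)^\h\to(\,\cdot\,)^\rhb$) then produces the honest morphism into $\Omega_X[d_X]\lltens_{\shd_X}(\shk\tens_{\sho_X}\shk)^\rhb$, with the required bimodule compatibility. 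Second, to extract the formal extension to the outside, the paper rewrites $\Omega_X[d_X]\lltens_{\shd_X}(-)\simeq\rhom_{\shd_X}(\sho_X,-)$ and applies Lemma~\ref{L15}, which commutes $(\scbul)^\rhb$ past $\rhom$; this yields $(((\Omega_X[d_X]\tens_{\sho_X}\shd_X)\lltens_{\shd_X}\shk)\lltens_{\shd_X}\shk)^\rhb$ directly, with all structures intact. Once you have these two moves, the rest of your outline goes through verbatim.
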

\begin{proof} 
We have the following isomorphisms in $\Derb({\shd_Y^\hbar}^\rop \tens[\C^\hbar] {\shd_Y^\hbar}^\rop)$:
\eqn
\bideimh f(\Omega_X[d_X] \tens[\sho_X]\shd_X^\hbar) &\simeq& \deimh f(\Omega_X[d_X] \tens[\sho_X] \shk_\h) \\ 
&\simeq& \deimh f((\Omega_X[d_X]\tens[\sho_X]\shk)^\h)
\eneqn
The first one results from the associativity of tensor products.
The second one results from $\Omega_X[d_X]\tens[\sho_X]\shk_\h\simeq (\Omega_X[d_X]\tens[\sho_X]\shk)^\h$. 

The morphism $\tr_{f,\h}$ is then constructed composing the morphisms below:
\eqn
\deimh f((\Omega_X[d_X]\tens[\sho_X] \shk)^\h)
\to \deim f(\Omega_X[d_X]\tens[\sho_X] \shk)^\rhb 
\to  \Omega_Y[d_Y]\tens[\sho_Y]\shd_Y^\h.
\eneqn
The first morphism is an application of Lemma~\ref{Aux3} which, in this case, preserves the bimodule structures involved. 
The second morphism is the formal extension of the classical trace morphism~\eqref{trace1}.
\end{proof}

\begin{remark}
Consider the absolute case $f=a_X$. 
In this case one gets $\shk_\h\simeq \sho_X^\h$ and the following isomorphisms hold in $\Derb(\coro)$:
\eqn 
\bideimh{a_X}(\Omega_X[d_X]\tens[\sho_X]\shd_X^\h) 
\simeq \rsect_c(X;\coro_X[2d_X])\simeq\rsect_c(X;\omega_X^\h).
\eneqn
Thus, the trace morphism $\tr_{a_X,\h}$ coincides with the morphism  
$\rsect_c(X,\omega_X^\h) \to \coro$ induced 
by the natural transformation $\reim{a_X} \epb{a_X} \to \id$.
\end{remark}

\begin{lemma}\label{P57}
Let $\shm\in\Derb({\shd_X^\h}^\rop)$. We have a canonical morphism in $\Derb({\shd_Y^\h}^\rop)$:
\eq\label{duality1}
\deimh f(\ul{\RD}_{\h,X}\shm) \to \ul{\RD}_{\h,Y}( \deimh f \shm).
\eneq
\end{lemma}
\begin{proof} 
First note that there is a basis change morphism in $\Derb({\shd_Y^\h}^\rop)$:
\eqn
&&\deimh f(\ul{\RD}_{\h,X}(\shm)) 
\simeq\reim f(\rhom[\Dh](\shm, \Omega_X[d_X]\tens[\sho_X]\shk_\h))\\
&&\to
\rhom[\shd_Y^\h](\reim f(\shm\lltens[\shd_X]\shk),\reim f((\Omega_X[d_X]\tens[\sho_X]\shk_\h)\lltens[\shd_X]\shk))\\
&&\simeq\rhom[\shd_Y^\h](\deimh f(\shm), \bideimh f(\Omega_X[d_X]\tens[\sho_X]\shd_X^h)).
\eneqn
We obtain~\eqref{duality1} composing the above morphism with the morphism 
induced by Proposition~\ref{trace} on the second term of the $\rhom$.
\end{proof}

\begin{corollary}\label{C59} 
For $\shm\in\Derb({\shd_X^\h}^\rop)$ and $F\in\Derb(\coro_X)$, 
there is a canonical morphism in $\Derb({\shd_Y^\h}^\rop)$:
\eq\label{duality7}
\deimh f(\RDD_{\coro_X} F \lltens[\coro_X]\ul{\RD}_{\h,X}(\shm)) 
\to \ul{\RD}_{\h,Y}( \deimh f(F\lltens[\coro_X] \shm)).
\eneq
\end{corollary}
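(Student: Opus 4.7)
The plan is to deduce this corollary from Proposition~\ref{P57} applied to the object $F\lltens[\coro_X]\shm$, combined with a standard hom--tensor identification of its dual. First I would observe that since $F$ is merely a sheaf of $\coro_X$-modules and $\coro_X$ lies in the center of $\shd_X^\h$, the right $\shd_X^\h$-action on $\shm$ extends $\coro_X$-linearly to give $F\lltens[\coro_X]\shm$ a canonical structure in $\Derb({\shd_X^\h}^\rop)$. Applying Proposition~\ref{P57} to this object yields in $\Derb({\shd_Y^\h}^\rop)$ a canonical morphism
$$\deimh f(\ul{\RD}_{\h,X}(F\lltens[\coro_X]\shm)) \to \ul{\RD}_{\h,Y}(\deimh f(F\lltens[\coro_X]\shm)).$$

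Next I would identify the source using the standard hom--tensor adjunction: for any right $\shd_X^\h$-module $N$ one has
$$\rhom[\Dh](F\lltens[\coro_X]\shm,N) \simeq \rhom[\coro_X](F,\rhom[\Dh](\shm,N)).$$
Taking $N=\Omega_X[d_X]\tens[\sho_X]\Dh$ gives the isomorphism $\ul{\RD}_{\h,X}(F\lltens[\coro_X]\shm)\simeq\rhom[\coro_X](F,\ul{\RD}_{\h,X}\shm)$ in $\Derb({\shd_X^\h}^\rop)$. Composing with the natural evaluation map
$$\RDD_{\coro_X} F\lltens[\coro_X]\ul{\RD}_{\h,X}\shm \to \rhom[\coro_X](F,\ul{\RD}_{\h,X}\shm),$$
which exists for any pair of $\coro_X$-modules and remains $\shd_X^\h$-linear because the $\shd_X^\h$-action is carried by the second factor, one obtains a canonical morphism in $\Derb({\shd_X^\h}^\rop)$:
$$\RDD_{\coro_X} F\lltens[\coro_X]\ul{\RD}_{\h,X}\shm \to \ul{\RD}_{\h,X}(F\lltens[\coro_X]\shm).$$

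Applying $\deimh f$ to this last morphism and then composing with the duality morphism supplied by Proposition~\ref{P57} produces the desired morphism~\eqref{duality7}. The only subtlety is verifying that every identification preserves the relevant right $\shd_X^\h$- and $\shd_Y^\h$-module structures, but each ingredient is a naturality statement coming from standard categorical adjunctions compatible with the central inclusion $\coro_X\subset\shd_X^\h$, so this amounts to bookkeeping rather than a substantive obstacle. I do not expect any genuine difficulty here; the corollary is essentially a formal consequence of Proposition~\ref{P57} plus tensor--hom adjunction, which is why the statement is phrased as a corollary rather than a theorem.
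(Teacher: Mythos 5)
Your proposal is correct and follows essentially the same route as the paper: both reduce the corollary to Proposition~\ref{P57} applied to $F\lltens[\coro_X]\shm$, precomposed with a canonical morphism $\RDD_{\coro_X}F\lltens[\coro_X]\ul{\RD}_{\h,X}\shm\to\ul{\RD}_{\h,X}(F\lltens[\coro_X]\shm)$ and then pushed forward by $\deimh f$. The only difference is that the paper simply asserts this intermediate morphism as canonical, whereas you spell out its construction via tensor--hom adjunction and the evaluation map, which is a harmless (and slightly more explicit) elaboration of the same argument.
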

\begin{proof}
Note that there is a canonical morphism in $\Derb({\shd_X^\h}^\rop)$: 
\eq\label{duality6}
\RDD_{\coro_X} F \lltens[\coro_X]\ul{\RD}_{\h,X}(\shm)\to \ul{\RD}_{\h,X}(F\lltens[\coro_X] \shm).
\eneq
The morphism~\eqref{duality7} is then obtained as a composition of morphisms:
\eqn
\deimh f(\RDD_{\coro_X} F \lltens[\coro_X]\ul{\RD}_{\h,X}(\shm)) 
&\to& \deimh f(\ul{\RD}_{\h,X}(F \lltens[\shd_X^\h]\shm))\\
&\to&\ul{\RD}_{\h,Y}( \deimh f(F\lltens[\coro_X] \shm)),
\eneqn
where the first arrow results from~\eqref{duality6} and the second arrow 
results from~\eqref{duality1}.
\end{proof}

\begin{theorem}\label{T60} 
Let $(\shm,F)$ be a good $f$-elliptic pair 
over $\coro$ and suppose that $f$ is proper when restricted to the support of $(\shm,F)$. 
Then, the canonical morphism
\eq\label{duality3}
\deimh f(\RDD_{\coro_X} F \lltens[\coro_X]\ul{\RD}_{\h,X}(\shm)) 
\to \ul{\RD}_{\h,Y}( \deimh f(F\lltens[\coro_X] \shm))
\eneq
is an isomorphism in $\Derb_\gd({\shd_Y^\h}^\rop)$.
\end{theorem}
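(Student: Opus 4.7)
The plan is to reduce the statement to the classical duality theorem for elliptic pairs over $\C$ from~\cite{ScS} via the conservativity of $\gr$ on cohomologically complete objects (Proposition~\ref{chc1}).

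First, I would verify that both sides of~\eqref{duality3} lie in $\Derb_\gd({\shd_Y^\h}^\rop)$, and in particular are cohomologically complete. For the right side, Theorem~\ref{T55} applied to the good $f$-elliptic pair $(\shm,F)$ shows $\deimh f(F\lltens[\coro_X]\shm)\in\Derb_\gd({\shd_Y^\h}^\rop)$; since $\ul{\RD}_{\h,Y}$ preserves coherence and (checking after $\gr$, using that $\gr$ commutes with $\rhom$ and that duality preserves goodness for $\shd_Y$-modules) also goodness, the right side is good. For the left side, observe that $(\ul{\RD}_{\h,X}\shm,\RDD_{\coro_X} F)$ is again a good $f$-elliptic pair over $\coro$: the transversality condition is invariant under duality on both factors (the characteristic variety is preserved by $\ul{\RD}_{\h,X}$ and $\SSi(\RDD_{\coro_X}F)=\SSi(F)^a$), and the same commutation with $\gr$ gives goodness. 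Hence Theorem~\ref{T55} applied to this dual pair places the left side in $\Derb_\gd({\shd_Y^\h}^\rop)$ as well. Both sides are therefore coherent $\shd_Y^\h$-modules in the derived sense, hence cohomologically complete.

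By Proposition~\ref{chc1}, it now suffices to show that $\gr$ of~\eqref{duality3} is an isomorphism in $\Derb(\shd_Y^\rop)$. Since $\gr$ commutes with tensor products, $\rhom$, and proper direct images (in particular with $\deimh f$, $\ul{\RD}_{\h,X}$, $\ul{\RD}_{\h,Y}$ and $\RDD_{\coro_X}$), the morphism $\gr$ of~\eqref{duality3} identifies, modulo these commutations, with the classical duality morphism
\eqn
\deim f\bigl(\RDD \gr(F)\lltens[\C_X]\ul{\RD}_X\gr(\shm)\bigr)\to\ul{\RD}_Y\deim f\bigl(\gr(F)\lltens[\C_X]\gr(\shm)\bigr)
\eneqn
associated with the good $f$-elliptic pair $(\gr(\shm),\gr(F))$ over $\C$ (cf.~Proposition~\ref{P1}), to which $f$ restricts properly on the support. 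The duality theorem of~\cite{ScS} then ensures the morphism is an isomorphism, and we conclude.

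The main obstacle is the verification that $\gr$ applied to the duality morphism of Corollary~\ref{C59} really agrees with the Schapira--Schneiders duality morphism. This reduces to checking that $\gr(\tr_{f,\h})=\tr_f$, where $\tr_{f,\h}$ is the $\hbar$-trace morphism of Proposition~\ref{trace}. Tracing through its construction, this amounts to confirming that each intermediate step commutes with $\gr$: the identification of Lemma~\ref{Aux0}, the morphism $\shk_\h\tens[\sho_X]\shk\to(\shk\tens[\sho_X]\shk)^\rhb$ of Lemma~\ref{Aux1}, the application of Lemma~\ref{Aux2} after $\reim f$, and the formal extension isomorphism $(\Omega_Y[d_Y]\tens[\sho_Y]\shd_Y)^\rhb\simeq\Omega_Y[d_Y]\tens[\sho_Y]\shd_Y^\h$ furnished by Proposition~\ref{P41}. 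Each of these commutes with $\gr$ by the general fact that $\gr$ sends formal extensions of coherent objects back to themselves and commutes with the functors involved, so that $\gr(\tr_{f,\h})$ coincides with the differential trace $\tr_f$ of~\eqref{trace1}. The same bookkeeping shows that $\gr$ applied to the basis change morphism in the proof of Proposition~\ref{P57} is the classical basis change, so the compatibility is fully established and the reduction to~\cite{ScS} is complete.
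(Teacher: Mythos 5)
Your proposal is correct and follows essentially the same route as the paper: establish that both sides are good (hence cohomologically complete) via Theorem~\ref{T55} applied to the pair and to its dual pair, then invoke conservativity of $\gr$ to reduce to the Schapira--Schneiders duality theorem for $(\gr(\shm),\gr(F))$. Your extra bookkeeping verifying $\gr(\tr_{f,\h})=\tr_f$ step by step is a point the paper dispatches with ``by the construction done,'' so it is a welcome elaboration rather than a divergence.
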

\begin{proof} 
Set $\shl_1=\deimh f(\RDD_{\coro_X} F \lltens[\coro_X]\ul{\RD}_{\h,X}(\shm))$ and 
$\shl_2=\ul{\RD}_{\h,Y}( \deimh f(F\lltens[\coro_X] \shm))$. 

First note that both $(\shm,F)$ and $(\ul{\RD}_{\h,X}\shm, \RDD_{\coro_X} F)$ are $f$-elliptic pairs that 
verify the conditions of the finiteness theorem~\ref{T55}. 
Therefore $\shl_1$ and $\shl_2$ are objects of $\Derb_\gd({\shd_Y^\h}^\rop)$.
Hence, by the conservativism of $\gr$ on coherent objects, it suffices to check that 
the induced morphism $\gr(\shl_1)\to \gr(\shl_2)$ is an isomorphism in $\Derb_\gd(\shd_Y^\rop)$.
Consider the isomorphisms: 
\eqn
&&\gr(\shl_1)\simeq \deim f(\RDD_{\C_X} \gr(F) \tens \ul{\RD}_{X}(\gr(\shm)))\\
&&\gr(\shl_2)\simeq \ul{\RD}_{Y}( \deim f(\gr(F)\tens[\C_X] \gr(\shm))).
\eneqn
By the construction of the duality morphism, $\gr(\shl_1)\to\gr(\shl_2)$ 
is precisely the duality morphism for $f$-elliptic pairs over $\C$ applied to $(\gr\shm,\gr F)$.
We conclude that $\gr(\shl_1)\to\gr(\shl_2)$ is an isomorphism in $\Derb_\gd({\shd_Y}^\rop)$ 
since the pair $(\gr \shm, \gr F)$ is in the conditions of the duality theorem of~\cite{ScS} 
(see~\cite[Theorem~5.15]{ScS}). 
\end{proof}

\paragraph{Remarks on particular cases.} 
We can apply our theorems in some particular cases. 
This is similar to what is done in~\cite{ScS} and we leave out the details.

{\rm (i)} For each $\shm\in\Derb_\coh({\shd_X^\h}^\rop)$ the pair $(\shm, \coro_X)$ forms an $f$-elliptic pair over $\coro$. 
Theorems~\ref{T55} and~\ref{T60} in this particular case give finiteness and duality theorems (in the smooth case) for $\shd_X^\h$-modules. 

{\rm (ii)} The classical finiteness theorem for coherent $\sho_X$-modules (Grauert's theorem) 
can be generalized to the $\h$-framework using similar arguments to those employed in the proof of Theorem~\ref{T55}. 
A relative duality theorem for coherent $\Oh$-modules can also be proved under the condition $\opb f \sho_Y^\h\isoto (\opb f \sho_Y)^\h$. 
In fact, with this condition one is able to construct a duality morphism using the one from the $\sho_X$-modules theory. 
Again the general idea is similar (in fact it is easier) to that of Theorem~\ref{T60}, thus we don't give further details.
Let us just remark that the isomorphism $\opb f \sho_Y^\h\isoto (\opb f \sho_Y)^\h$ holds if $f$ is smooth 
(in Proposition~\ref{VB}, choose for $\mathcal{B}$ the family of open Stein subsets of $Y$).

On the other hand, we can apply our theorems to elliptic pairs of the form $(\shf\tens[\sho_X^\h]\shd_X^\h,\coro_X)$, $\shf$ being an object of $\Derb_\coh(\sho_X^\h)$. However, unlike the $\shd$-modules case in~\cite{ScS}, it is not clear if we obtain the finiteness and duality results for $\sho_X^\h$-modules mentioned above. In other words, we don't know what is the relation bewteen $\deimh f(\shf\tens[\sho_X^\h]\shd_X^\h)$ and $\reim f(\shf)\tens[\sho_Y^\h]\shd_Y^\h$. Only in the case $f=a_X$ it becomes obvious these objects are isomorphic. 

Note that in the case $f=a_X$ one gets absolute finiteness and duality results for $\sho_X^\h$-modules which are contained in the finiteness and duality results for deformation-quantization modules of~\cite{KS3}.

{\rm (iii)} Consider the case $f=a_X$. Let $(\shm,F)$ be a good $a_X$-elliptic pair with compact support. 
Then, Theorem~\ref{T55} says that the cohomology modules of the complex of global solutions of $\shm$ on the generalized 
sheaf of holomorphic functions associated to $F$ are finitely generated over $\coro$. 
In particular, we get the following statement: 
if $\shm$ is a good $\shd_X^\h$-module with compact support, 
then the cohomologies of $\rsect(X;\Sol_\h(\shm))$ are finitely generated over $\coro$.

{\rm (iv)} Let $X$ be the complexification of a real analytic manifold $M$ 
and let $\sha_M$ denote the sheaf of real analytic functions on $M$. 
Consider the following sheaves of real analytic functions with $\h$-parameter:
\eqn
\sha_{M,\h}\eqdot\C_M^\h\tens[\coro_X]\sho_X^\h\simeq\C_M\tens\sho_X^\h, \quad \sha_M^\h\eqdot(\C_M\tens\sho_X)^\h.
\eneqn
One has the isomorphism $\sha_M^\h\simeq\sha_M^\rhb$, 
since $\sha_M$ verifies the hypothesis of Proposition~\ref{chc5} taking for $\mathcal{B}$ the family of all open subsets of $M$.
Hence, both $\sha_{M,\h}$ and $\sha_M^\h$ are concentrated in degree 0 
and we can identify them to usual sheaves. 
There is a natural monomorpism $\sha_{M,\h}\into \sha_M^\h$ in $\md[\shd_X^\h]$ which is not an epimorphism
(this morphism is a particular case of Lemma~\ref{L35}).

Let us consider the c-soft sheaf of hyperfunctions on $M$, defined by $\shb_M\eqdot\rhom[\C_X](\C_M,\sho_X)$. 
Since $c$-soft objects are $\sect(K;\_)$-injective for any compact $K\subset X$, $\shb_M$ is $\h$-acyclic by Proposition~\ref{chc5}.
Hence: $$\shb_M^\rhb\simeq \shb_M^\h\simeq \rhom[\coro_X](\RDD_{\coro_X} \coro_M,\Oh).$$


Applying our theorems to $a_X$-elliptic pairs of the form $(\shm,\coro_M)$ we obtain:
 

\begin{corollary}\label{C63}
Let $\shm$ be an elliptic $\Dh$-module on $M$. 
\banum 
\item There is a commutative diagram of isomorphisms 
in $\Derb(\coro_X)$:
\eqn
&&\xymatrix{
\rhom[\Dh](\shm,\sha_{M,\h}) \ar[r]^-\sim\ar[dr]^-\sim 
&\rhom[\Dh](\shm,\sha_M^\h)\ar[d]^\sim\\
&\rhom[\Dh](\shm,\shb_M^{\h}).
}\eneqn
\item
If $M$ is compact and $\shm \in \Derb_\gd(\Dh)$, then $\rsect(M;\Omega_M^\h\lltens[\shd_M^\h]\shm)$
is the dual of $\rsect(M;\rhom[\shd_M^\h](\shm,\shb_{M,\h}))$ and both objects 
belong to $\Derb_f(\coro)$.
\eanum
\end{corollary}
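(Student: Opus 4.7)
The plan is to derive (a) from Theorem~\ref{T54} applied to the elliptic pair $(\shm,(\RDD\C_M)^\h)$ with the constant map $f=a_X$, and to derive (b) from Corollary~\ref{C61} applied to the elliptic pair $(\shm,\coro_M)$.

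For (a), I first check that $(\shm,(\RDD\C_M)^\h)$ is an $a_X$-elliptic pair over $\coro$: by Proposition~\ref{P25}, $(\RDD\C_M)^\h$ is cohomologically complete with graded $\RDD\C_M$, so $\SSi((\RDD\C_M)^\h)=\SSi(\RDD\C_M)=\SSi(\C_M)^a=T_M^\ast X$, and since $T_M^\ast X$ is antipodal-invariant the ellipticity hypothesis on $\shm$ transfers. In the absolute case $f=a_X$ one has $\shk=\sho_X$ and $\shk_\h=\sho_X^\h$, so Theorem~\ref{T54} with $F=\RDD\C_M$ produces a commutative triangle of isomorphisms whose vertices are $\rhom[\Dh](\shm,\RDD_{\coro_X}(\RDD\C_M)^\h\lltens[\coro_X]\sho_X^\h)$, $\rhom[\Dh](\shm,(\C_M\tens\sho_X)^\rhb)$ and $\rhom[\Dh](\shm,\rhom(\RDD\C_M,\sho_X)^\rhb)$.

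The identification of these three vertices with the sheaves of the statement is then essentially bookkeeping. Biduality of $\R$-constructible sheaves together with the remark after Lemma~\ref{T31} give $\RDD_{\coro_X}(\RDD\C_M)^\h\simeq(\RDD\RDD\C_M)^\h\simeq\C_M^\h\simeq\coro_M$, and Lemma~\ref{T31}(ii) then yields $\coro_M\lltens[\coro_X]\sho_X^\h\simeq\C_M\tens\sho_X^\h=\sha_{M,\h}$, so the leftmost vertex is $\rhom[\Dh](\shm,\sha_{M,\h})$. The middle vertex is $\rhom[\Dh](\shm,\sha_M^\h)$ by biduality and the definition $\sha_M=\C_M\tens\sho_X$, and the rightmost is $\rhom[\Dh](\shm,\shb_M^\h)$ by the identification $\shb_M^\h\simeq\rhom(\RDD\C_M,\sho_X)^\rhb$ recalled in the text preceding the corollary. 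The commutativity of the triangle comes for free from Theorem~\ref{T54}.

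For (b), the set $\supp(\shm)\cap M$ is closed in the compact $M$, hence the good elliptic pair $(\shm,\coro_M)$ has compact support and Corollary~\ref{C61} applies. Since both complexes it produces are supported in $M$, the functor $\rsect_c(X;\scbul)$ coincides with $\rsect(M;\scbul)$; combined with the identifications $\rhom[\coro_X](\coro_M,\sho_X^\h)\simeq\shb_M^\h$ and $\Omega_X^\h[d_X]\lltens[\shd_X^\h]\shm\lltens[\coro_X]\coro_M\simeq\Omega_M^\h\lltens[\shd_M^\h]\shm$, this yields the statement of (b).

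The main obstacle is not technical but one of setup: one has to realise that the correct input to Theorem~\ref{T54} is $F=\RDD\C_M$ and not $F=\C_M$, so that the slot $\RDD_{\coro_X}F^\h\lltens[\coro_X]\shk_\h$ reduces via biduality and Lemma~\ref{T31}(ii) to $\coro_M\lltens[\coro_X]\sho_X^\h=\sha_{M,\h}$. Once this choice is made, every other identification follows directly from Lemma~\ref{T31}, Proposition~\ref{P29}, and the very definitions of $\sha_M^\h$ and $\shb_M^\h$.
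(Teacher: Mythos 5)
Your proposal follows exactly the route the paper intends: part (a) is Theorem~\ref{T54} with $f=a_X$, $\shk=\sho_X$ and $F=\RDD\C_M$ (so that $\RDD F\simeq\C_M$ and the three vertices become $\sha_{M,\h}$, $\sha_M^\h$, $\shb_M^\h$ via the diagram of Lemma~\ref{preparation} displayed just before the corollary), and part (b) is Corollary~\ref{C61} applied to $(\shm,\coro_M)$ with $\rsect_c(X;\scbul)=\rsect(M;\scbul)$ on complexes supported in the compact $M$. The only caveat is the bookkeeping of the shift $[d_X]$ and orientation twist between $\rhom[\coro_X](\coro_M,\sho_X^\h)$ and $\shb_M^\h\simeq\rhom[\coro_X](\RDD_{\coro_X}\coro_M,\sho_X^\h)$ in (b), but this imprecision is already present in the paper's own notation ($\shb_{M,\h}$, $\Omega_M^\h$) and does not affect the substance of your argument.
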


\small

\providecommand{\bysame}{\leavevmode\hbox to3em{\hrulefill}\thinspace}

\noindent
David Raimundo, Centro de Matem\'atica e Aplica\c c\~oes Fundamentais da Universidade de Lisboa,
Departamento de Matem\'atica, FCUL, Edif\'icio C6, piso 2, Campo Grande
1749-16, Lisboa, Portugal, \texttt{dsraimundo@fc.ul.pt}

\end{document}